\mathchardef\colon="303A  % :=
\mathchardef\gt="313E  % arithmetic
\mathchardef\lt="313C  % strict order
\theoremstyle{definition}
\newtheorem{deff}{Definition}[section]
\newtheorem{prop}[deff]{Proposition}
\newtheorem{exa}[deff]{Example}
\newtheorem{lem}[deff]{Lemma}
\newtheorem{Remark}[deff]{Remark}
\begin{document}
\title{A co-free construction for elementary doctrines}
\author{Fabio Pasquali}
\maketitle

\begin{abstract}
We provide a co-free construction which adds elementary structure to a primary doctrine. We show that the construction preserves comprehensions and all the logical operations which are in the starting doctrine, in the sense that it maps a first order many-sorted theory into a the same theory formulated with equality. As a corollary it forces an implicational doctrine to have an extentional entailment.
\end{abstract}

\section*{Introduction}

This paper deals with the notion of internal equality in doctrines. Doctrines were introduced by Lawvere (\cite{LawDiag}, \cite{LawAdj}  and \cite{LawEq}) and we pospone in section \ref{sec1} their formal definition. For the purpose of this introduction it is enough to think of doctrines as those presheaves such that, given a theory $\mathcal{T}$ over a many-sorted relational language $\mathcal{L}$, one looks at objects and morphism of the domain category as types and terms of $\mathcal{L}$ respectively, while a well formed formula in $\mathcal{T}$ of type $A$ is an element in the fiber over $A$. Lawvere made extensive use of the language of adjoints and Jacobs \cite{Jacobs} described equality between terms of a given type as a formula in the fiber over the product of that type with itself, satisfying the following rule of inference
\[
\AxiomC{$\Gamma, x:X \mid \phi \vdash \psi[x/y]$}
\doubleLine
\UnaryInfC{$\Gamma, x:X, y:X \mid \phi \wedge x=_Xy \vdash \psi$}
\DisplayProof
\]
where the double line indicates that one of the two sequents holds exactly when the other holds. A doctrine is a first order theory with equality if it possesses a formula $=_X$, for every sort $X$, which satisfies the previous rule.\\ A way to introduce higher order quantification is to consider a new type $\Omega$ in the underlying signature and thinks of terms of type $\Omega$ as propositions. From the categorical viewpoint this generates a correspondence between terms of type $\Omega$ and formulas, and therefore it makes sense to investigate how the notion of internal equality $=_{\Omega}$ is related to logical equivalence. A link is in the following rule, taken from \cite{BoiJoy} and \cite{Jacobs}
\[
\AxiomC{$\Gamma \mid \xi \wedge \phi \vdash \psi$}
\AxiomC{$\Gamma \mid \xi \wedge \psi \vdash \phi$}
\doubleLine
\BinaryInfC{$\Gamma \mid \xi \vdash \phi =_{\Omega} \psi$}
\DisplayProof
\]
where it is implicit that if $\phi$ and $\psi$ are formulas over the context $\Gamma$, then $\phi=_{\Omega} \psi$ is still a formula over $\Gamma$. We say that a doctrine is a higher order many-sorted theory with extentional entailment if there is an object $\Omega$ in the base category and a formula $=_\Omega$ in the fiber over $\Omega \times \Omega$ which satisfies both the previous rules.\\ 
In the present paper we provide a co-free construction that, starting from any doctrine $P$, produces a new doctrine $P_\D$ with equality. That is to say that for every object $X$ in the domain category of $P_\D$ there exists a well formed formula in $P_\D(X\times X)$ which satisfies the first one of the previous rules. We show also that if the starting doctrine $P$ is an higher order implicational theory, the resulting doctrine $P_\D$ will have an internal equality over $\Omega$ satisfying both the previous rules; in other words: $=_{\Omega}$ and logical equivalence comes to coincides.\\
In section \ref{sec1} we give the definitions of doctrines and some relevant examples. In section \ref{sec2} we introduce the construction of Maietti and Rosolini of the category of quotients and the doctrine of descent data which is the base of the co-free construction we are going to provide in \ref{sec3}. In the last section we show which properties are preserved by the construction and some applications.
%%%%%%%%%%%%%%
%%%%%%%%%%%%%%
%%%%%%%%%%%%%%
%%%%%%%%%%%%%%
\section{Doctrines}\label{sec1}
We recall those structures which we will be concerned with in the paper, see \cite{RM} and \cite{Tripinret}.
\begin{deff}
A \textbf{primary doctrine} is a functor $P: \C ^{op} \arr \textbf{ISL}$,
where $\textbf{ISL}$ is the subcategory of \textbf{Posets} consisting of inf-semilattices and homomorphisms and $\C$ is a category with binary products.
\end{deff}
For the rest of the paper we will write $f^*$ instead of $P(f)$, to indicate the action of the functor $P$ on a morphism $f$ of $\C$. We shall refer to $f^*$ as the reindexing functor along $f$. Left and right adjoints to reindexing functor $f^*$ will be $\exists_f$ and $\forall_f$ respectively. We say that a doctrine has finite joins if every fiber has finite joins. Analogously we say that a doctrine is implicational if every fiber has relative pseudo complements which commute with reindexing. For every pair of element $x$ and $y$ we will denote their meet by $x\wedge y$, by $x\lor y$ their join and by $x\imply y$ their relative pseudo complements. Top and bottom elements will be $\top$ and $\bot$ respectively. Joins are said to be distributive if for every $x$, $y$ and $z$ it holds that $x \wedge (y \lor z) = (x \wedge y) \lor (x \wedge z)$.
\begin{deff}\label{ele}
A primary doctrine $P$ is said to be \textbf{elementary} if for every $A$ in $\C$ there exists an object $\delta_A$ in $P(A \times A)$ such that for every $X$ in $\C$
 \begin{itemize}
\item[i)] the assignment $\pi_1^* (\alpha) \wedge \delta_A$ determines a left adjoint to $\Delta_A^*$
\item[ii)] the assignment $\langle \pi_1,\pi_2\rangle^*(\alpha)\wedge \langle\pi_2,\pi_3\rangle^*(\delta_A)$ determines a left adjoint to $(id_X \times \Delta_A)^*$ 
\end{itemize}
\end{deff} 
Primary doctrines are the objects of the 2-category \textbf{PD} in which\\
\\
the \textbf{1-arrows} are pairs $(F,f): P \arr R$
$$
\xymatrix{
\C^{op}\ar[rrd]^-{P}="a"\ar[dd]_{F}&&\\
&&\textbf{ISL}\\
\mathbb{D}^{op}\ar[rru]_-{R}="b"&&
\ar "a";"b"_{f}}
$$
where the functor $F$ preserves products and $f$ is a natural transformation from the functor $P:\C^{op}\arr \textbf{ISL}$ to the functor $R\circ F:\D^{op}\arr \textbf{ISL}$\\
\\
the \textbf{2-arrows} are those natural transformations $\nu$ 
$$
\xymatrix{
\C^{op}\ar[rrrd]^-{P}="a"\ar@/_1pc/ [dd]_{F}="c"\ar@/^1pc/ [dd]^{G}="d"&&&\\
&&&\textbf{ISL}\\
\mathbb{D}^{op}\ar[rrru]_-{R}="b"&&&
\ar@/_1pc/"a";"b"_{f}\ar@/^1pc/"a";"b"^{g}_{\le \ \ }\ar"c";"d"^{\nu}}
$$
such that, for every object $A$ in $\C$ and every $\alpha$ in $P(A)$, it holds that $\nu_A^* (f_A (\alpha)) \le g_A (\alpha)$.\\
\\
We call \textbf{ED} the 2-subcategory of \textbf{PD}, in which the object are elementary doctrines and the 1-arrows are those 1-arrows in \textbf{PD} such that $$f_{A\times A}(\delta_A) = <F\pi_1,F\pi_2>^*\delta_{FA}$$ for every 1-arrows $(F,f)$ and for every object $A$ in $\C$.
\begin{deff}\label{uni} A primary doctrine is called \textbf{universal} if for every projection arrows $\pi$ in $\C$ the functor $\pi^*$ has a right adjoint $\forall _{\pi}$ satisfying Beck-Chevalley condition: given a pullback diagram of the kind
$$
\xymatrix{
X\times Y' \ar[d]_-{id \times f}\ar[r]^-{\pi'}& Y'\ar[d]^-{f}\\
X\times Y \ar[r]_-{\pi}& Y
}
$$
it holds that $\forall_{\pi'} \circ (id\times f)^* = f^* \circ \forall_{\pi}$
\end{deff}
A primary docrine is \textbf{existential} if the reindexing functors along a projection have a left adjoint satisfying Beck-Chevalley and Frobenius reciprocity: $\exists_\pi(\alpha\wedge \pi^*\beta)=\exists_\pi(\alpha)\wedge \beta$, for $\alpha$ in $P(X\times Y)$ and $\beta$ in $P(Y)$.
\begin{Remark}\label{GQ} Recall from \cite{LawAdj,Tripinret} that in an elementary existential doctrine $P$ for every morphism $f:A\arr B$ in the base category there exists a functor $\exists_f : P(B)\arr P(A)$ such that $\exists_f \dashv f^{*}$. Indeed if $\pi_A$ and $\pi_B$ are the projections from $A\times B$ to $A$ and $B$ respectively, for $\alpha$ in $P(A)$
$$\exists_f (\alpha) := \exists_{\pi_B}((id_B \times f)^*\delta_B\wedge \pi_A^*\alpha)$$
Such a generalized quantification satisfies Frobenius Reciprocity. For $\beta$ in $P(B)$, we have that
$(id_B\times f)^*\delta_B \wedge f^*\beta = (id_B\times f)^*\delta_B \wedge \pi_B^*\beta$. Therefore
\begin{displaymath}
\begin{array}{c}
\exists_{\pi_B}((id_B \times f)^*\delta_B\wedge \pi_A^*\alpha \wedge f^*\beta) =\\
\exists_{\pi_B}((id_B \times f)^*\delta_B\wedge \pi_A^*\alpha \wedge \pi_B^*\beta) =\\
\exists_{\pi_B}((id_B \times f)^*\delta_B\wedge \pi_A^*\alpha)\wedge \beta
\end{array}
\end{displaymath}
And for a pullback square such as that in \ref{uni} the Beck-Chevalley condition holds: $\exists_{(id_X \times f)}\ \pi^* = \pi'^*\ \exists_f$.
\end{Remark}
%\begin{deff} 
%Given a primary doctrine $P:\C^{op}\arr \textbf{ISL}$, we say that $P$ has \textbf{comprehensions} if for every object $A$ in $\C$ and every element $\alpha$ in $P(A)$ there exists an arrow $\lfloor \alpha \rfloor : X \arr A$ in $\C$ such that\\
%\\
%$\bullet$ $\top_X \le \lfloor \alpha \rfloor ^* (\alpha)$\\
%\\
%$\bullet$ for every arrow $f:Y\arr A$ in $\C$ such that $\top_Y \le f^*(\alpha)$, there exists a unique $u:Y\arr X$ making the triangle commutes
%$$
%\xymatrix{
%X\ar[rr]^{\lfloor \alpha \rfloor}&&A\\
%Y\ar[rru]_{f}\ar@{.>}[u]^{u}&&
%}
%$$
%\end{deff}
%A comprehension $\lfloor \alpha \rfloor : X \arr A$ of $\alpha$ is said to be \textbf{full} if $\alpha \le \beta$ whenever $\top_X \iso \lfloor \alpha \rfloor ^* (\beta)$.
%We say that a doctrine $P$ has full comprehensions if every comprehension arrow is full.
\begin{deff}
A primary doctrine is said to have a \textbf{weak power objects} if for every $A$ in $\C$ there exists an object $\pi A$ in $\C$ and an element $\in_A$ in $P(A\times \pi A)$ such that, for every object $B$ in $\C$ and element $\phi$ in $P(A \times B)$ there exists a morphism $\{\phi\}: B\arr \pi A$ such that $\phi=(id_A \times \{\phi\})^*\in_A$.
\end{deff}
\begin{Remark}\label{p1}
In the case the base category $\C$ has a terminal object $1$: the first item in the definition \ref{ele} is redundant, since it becomes a particular instance of the second; when the doctrine has weak power objects, for every object $A$ in $\C$ each element $\phi$ in $P(A)$ determines a term of type $\pi1$ via the following  isomorphism:
$$
\xymatrix{
1\times \pi1\ar[r]^{i}&\pi1\\
1\times A\ar[u]^{id_1\times \{j^*\phi\}} \ar[r]_{j}& A\ar[u]_{\{\phi\}}
}
$$
we will denote with  $\epsilon_1$ the element $(i^{-1})^*\in_1$; in the case $\C$ has all comprehensions, defined to be those morphisms $\lfloor \phi \rfloor : X \arr A$ which are terminal with respect to the property that $\top_X \le \lfloor \phi \rfloor^*(\phi)$, for every $\phi$ in $A$ (see \cite{RM}), then $\lfloor \phi \rfloor$ is weakly classified by $\{\phi\}$, where the true arrow is $\lfloor \epsilon_1\rfloor : 1 \arr \pi1$.
\end{Remark}
There are several examples of doctrines, we list a few.
%\begin{exa}\label{syn} (\textbf{Syntactic}) This is the indexed order $LT: V^{op} \arr \textbf{ISL}$ built on the Lindenbaum-Tarski algebras of well-formed formulas of a first order theory (the definition is taken from \cite{R}).\\
%\\
%Given a theory $\mathcal{T}$ in a first order language $\mathcal{L}$, $V$ is the category of lists of distinct variables $\vec{x}$ as objects and substitutions $[\vec{t}/\vec{y}] : \vec{x}\arr \vec{y}$ as morphisms.\\
%An element of $LT(\vec{x})$ is a provable equivalence class of well-formed formula with no more free variables than ones which compare in $\vec{x}$. Order $[\phi]\le[\psi]$ is given by logical entailment    $\phi \vdash_{\mathcal{T}} \psi$ on representatives.
%\end{exa}
\begin{exa}\label{syn} (\textbf{Syntactic}) Given a theory $\mathcal{T}$ in a first order language $\mathcal{L}$, the base category $\mathbb{V}$ has lists of distinct variables $\vec{x}=(x_1, x_2, . . . x_n)$ as objects and lists of substitutions $[\vec{t}/\vec{y}]:\vec{x}\arr \vec{y}$ as morphisms. Composition is given by simultaneous substitution. For an object $\vec{x}$ in $\mathbb{V}$, the fiber over $\vec{x}$ consists of equivalence classes of well-formed formulae of $\mathcal{L}$ with no more free variables than $x_1, x_2, . . . x_n$, with respect to reciprocal entailment of $\mathcal{T}$, see \cite{RM}.
\end{exa}
\begin{exa}\label{sub} (\textbf{Subobjects}) Suppose $\C$ a small category with binary products and pullbacks. Consider the functor that assigns for every object $A$ in $\C$ the collection $\textbf{Sub}(A)$ of subojects with codomain $A$, ordered by factorization. The top element is (the equivalence class of) the identity arrow. A representative of $\alpha \wedge \beta$ is any pullback of $\alpha$ along $\beta$. Given a morphism $f$ in $\C$, $f^*\alpha$ is the class of any pullback of $\alpha$ along $f$. If $\C$ is regular, the doctrine has left adjoints of all reindexing functors. It is elementary with $\delta_A=\Delta_A : A \arr A \times A$. $\textbf{Sub}:\C^{op}\arr \textbf{ISL}$ has full comprehensions. An element $\alpha : X\arr A$ in $\textbf{Sub} (A)$  has itself as its own comprehension. Consider the following diagrams
$$
\xymatrix{
X\ar[d]_-{\top_X}\ar[r]^-{\top_X}&X\ar[d]^-{\alpha}\\
X\ar[r]_-{\alpha}&A
}
\quad \qquad
\xymatrix{
Y\ar[rd]_-{\top_Y}\ar[r]^-{k}&P\ar[r]^-{p}\ar[d]^-{f^*\alpha}&X\ar[d]^-{\alpha}\\
&Y\ar[r]_-{f}&A
}
\quad \qquad
\xymatrix{
X\ar[rd]_-{\top_X}\ar[r]^-{h}&Q\ar[r]^-{q}\ar[d]^-{\alpha^*\beta}&X\ar[d]^-{\beta}\\
&X\ar[r]_-{\alpha}&A
}
$$
where $P$ is a pullback of $\alpha$ along $f$ and $Q$ the pulback of $\beta$ along $\alpha$. The left one is a pullback and says that $\top_X \le \alpha^* \alpha$. The second proves that if $\top_Y \le\ f^*\alpha$ ($\le$ is $k$), then $f$ factorizes through $\alpha$. Third pullback shows that if $\top_X \iso\ \alpha^*\alpha\ \le\ \alpha^*\beta$ ($\le$ is $h$), then $\alpha\ \le\ \beta$ ($\le$ is $q \circ h$).
%If the base category $\C$ is an elementary topos, then \textbf{Mono} has power objects: $\pi A$ is $\Omega ^{A}$. $\in _A$ is the pullback of $\text{true} :1\arr \Omega$ along $ev_A : A \times \Omega^{A} \arr \Omega$. For every mono $y: Q\arr A\times B$ define $\{y\}:B\arr \Omega^{A}$ to be the exponential transpose of the classifying morphism $\chi_{_Q} : A\times B\arr \Omega$. The converse is true in a weaker form. Suppose the doctrine \textbf{Mono} to have power objects. Define $\Omega$ to be $\pi1$. The true arrow is $\in_1: I\arr\pi1$. Every mono $\phi:X\arr A$ is classified, not in a unique way, by $\{\phi\}$, since reindexing is given by pullbacks. For each $A$ in $\C$, $\Omega^{A}$ is $\pi A$ and $ev_A$ is $\{\in_A\}$. For every morphism $f : A\times B\arr \Omega$, the transpose $\overline{f}$ is $\{f^*\in_1\}$. Then $\{\in_A\} \circ id_A\times\{f^*\in_1\}$ and $f$ need not to be the same morphism, but both classify $f^*\in_1$.
%As proved in \cite{MM} pag.59, given $f^*: \C/A \arr \C/A'$, if $\C/A$ has relative pseudo complements, then $f^*$ has a right adjoint.
\\A particular case is when $\C$ is a small, full subcategory of \textbf{Set} closed under binary products and subsets and the functor $\textbf{Sub}:\C^{op}\arr \textbf{ISL}$ coincides with the powerset functor
\end{exa}
\begin{exa}\label{Tripos}(\textbf{Triposes}) We refer to the definition given by Pitts in \cite{Tripinret}.\\
 Given a category $\C$ with binary products, a tripos is a primary doctrine $P: \C ^{op} \arr \textbf{ISL}$ such that:
(i) for every object $A$ in $\C$, $P(A)$ is a Heyting Algebra (ii) for every arrow $f$ in $\C$, $f^*$ is an homomorphism of Heyting algebras (iii) for every projection arrow $\pi$ in $\C$ the functor $\pi^*$ has left and right adjoints satisfying the Beck-Chevalley conditions (iv) $P$ has weak power objects (v) for every object $A$ in $\C$ there exists an element $\delta_A$ in $P(A \times A)$ such that, for all $\alpha$ in $P(A\times A)$, $\top_A \le \Delta_A^*(\alpha)$ if and only if $\delta_A \le \alpha$.\\
 \\
 All triposes are universal doctrine with weak power objects. They are elementary, since 
 the assignment $\exists _{\Delta _X} (\alpha) := \pi_1 ^* (\alpha) \wedge \delta _X$ provides a left adjoint to $\Delta _X ^*$, in fact
\[ \AxiomC{$\alpha \leq \Delta _X ^* (\beta)$}
\UnaryInfC{$\top _ X \leq \alpha \imply \Delta _X ^* (\beta)$}
\UnaryInfC{$\top _ X \leq \Delta_X ^* (\pi _1 ^* (\alpha) \imply \beta)$}
\UnaryInfC{$\delta _X \leq \pi _1 ^* (\alpha) \imply \beta$}
\UnaryInfC{$\delta _X \wedge \pi _1 ^* (\alpha) \leq \beta$}
\DisplayProof
\qquad
\AxiomC{$\exists_{\Delta _X}(\alpha) \leq \beta$}
\UnaryInfC{$\pi_1 ^*(\alpha) \wedge \delta_X \leq \beta$}
\UnaryInfC{$\alpha \wedge \Delta_X ^*(\delta_X) \leq \Delta_X ^*(\beta)$}
\UnaryInfC{$\alpha \wedge \top _X \leq \Delta_X ^*(\beta)$}
\UnaryInfC{$\alpha \leq \Delta_X ^*(\beta)$}
\DisplayProof
\]
and 
the assignment $\exists _e (\alpha) :=  \langle \pi _1, \pi _2\rangle ^* (\alpha) \wedge \langle \pi _2, \pi _3\rangle ^*(\delta _A)$ determines a left adjoint to the reindexing of $e:=id_X \times \Delta_A : X\times A \arr X\times A\times A$
\[ \AxiomC{$\exists _e (\alpha) \leq \beta$}
\UnaryInfC{$e^*\exists _e  (\alpha) \leq e^*(\beta)$}
\UnaryInfC{$\alpha \wedge \langle\pi _2, \pi _2\rangle ^*(\delta _A) \leq e^*(\beta)$}
\UnaryInfC{$\alpha \wedge \pi_2 ^* \Delta_A ^* \exists _{\Delta _A} (\top _A) \leq e^*(\beta)$}
\UnaryInfC{$\alpha \wedge \pi_2 ^* (\top _A) \leq e^*(\beta)$}
\UnaryInfC{$\alpha \leq e^*(\beta)$}
\DisplayProof
\qquad
\AxiomC{$\alpha \leq e^*( \beta)$}
\UnaryInfC{$\top _{X\times A} \leq \alpha \imply e^*(\beta)$}
\UnaryInfC{$\pi_2 ^*\top _A \leq e^* \langle\pi_1,\pi_2\rangle^*(\alpha) \imply e^*(\beta)$}
\UnaryInfC{$\top _A \leq \forall_{\pi_2}e^* (\langle\pi_1,\pi_2\rangle^*(\alpha) \imply \beta)$}
\UnaryInfC{$\top _A \leq \Delta _A ^* \forall _{\langle\pi_2,\pi_3\rangle} (\langle\pi_1,\pi_2\rangle^*(\alpha) \imply \beta)$}
\UnaryInfC{$\delta _A \leq \forall _{\langle\pi_2,\pi_3\rangle} (\langle\pi_1,\pi_2\rangle^*(\alpha) \imply \beta)$}
\UnaryInfC{$\langle\pi_2,\pi_3\rangle^* (\delta _A) \leq \langle\pi_1,\pi_2\rangle^*(\alpha) \imply \beta$}
\UnaryInfC{$\langle\pi_2,\pi_3\rangle^* (\delta _A) \wedge \langle\pi_1,\pi_2\rangle^*(\alpha) \leq \beta$}
\UnaryInfC{$\exists _e (\alpha) \leq \beta$}
\DisplayProof
\]
Similarly it can be proved that Frobenius reciprocity is verified (see also \cite{VanOO}, pag 60).
Two important examples of triposes are $\mathbb{H}^{(-)}$, for a complete Heyting algebra $\mathbb{H}$, and $\mathbb{P}(\mathcal{N})^{(-)}$, for a partial combinatory algebra over a set $\mathcal{N}$. In each case $\C$ is $\textbf{Set}$, the category of sets and functions.
There is no need for a tripos to have comprehensions. But this is the case for localic triposes $\mathbb{H}^{(-)}$ and realizability triposes $\mathbb{P}(\mathcal{N})^{(-)}$.
Take a set $X$ and an object $\phi : X \arr \mathbb{H}$: a comprehension of $\phi$ is given by the inclusion $\lfloor \phi \rfloor : \{x \ \epsilon \ X\mid \top \le \phi(x)\} \hookrightarrow X$. The same holds for realizability troposes, for which $\lfloor \phi \rfloor : \{x \ \epsilon \ X\mid \mathcal{N} \subseteq \phi(x)\} \hookrightarrow X$.
These comprehensions can not be full. Take $\lfloor \phi \rfloor : A \hookrightarrow X$ and consider the function $\psi: X \arr \mathbb{H}$ defined by $\psi(x) = \top$ if $x\ \epsilon \ A$ and $\bot$ otherwise. For this function certainly holds $\lfloor \phi \rfloor^* (\psi) = \top$, but it is not the case that $\phi \le \psi$.
\end{exa} 
\begin{exa}\label{TOP}(\textbf{Topologies})
Consider the category $\textbf{TOP}$ of topological spaces and continuos functions. For every topological space $X$, $\mathcal{O}(X)$ is the collection of its open sets, and then it possesses finite meets and arbitrary joins. Take the functor $\mathcal{O}
:\textbf{TOP}^{op} \arr \textbf{ISL}$ determined by the following assignment
$$
\xymatrix@R=1.5ex@C=1.8ex{
(X, \mathcal{O}(X))\ar[dd]_-{f}&&\mathcal{O}(X)\\
&\mapsto&\\
(Y,\mathcal{O}(Y))&&\mathcal{O}(Y)\ar[uu]_-{f^{-1}}
}
$$
Even though each fiber is an Heyting algebra, and therefore it has pseudo relative complements (see \cite{MM}, page 51), $\mathcal{O}$ is not implicational as a doctrine: given a generic continuos function $f$, we have that pseudo relative complements need not commute with reindexing (see \cite{SS}, page 39). $\mathcal{O}$ is existential, since every projection functor has a left adjoint (see \cite{MM}, page 58) satisfying Beck-Chevalley condition and Frobenius reciprocity (recall that projections are open functions). $\mathcal{O}$ has full comprehensions. Given a set $X$, for any open set $S$ in $O(X)$, define its comprehension to be the inclusion function $\lfloor S \rfloor : (S, O_{S}(X)) \hookrightarrow (X, O(X))$, where $O_{S}(X)$ is the topology induced by $S$. These comprehensions are also full. Suppose $Q$ in $O(X)$ such that $\lfloor S \rfloor^{-1}(Q) = S$, this means $\{x\ \epsilon \ S \mid x\ \epsilon \ Q\} = S \cap Q = S$, so $S \subseteq Q$.\\
The doctrine has weak power objects. We call $\Sigma$ the Sierpinski space consisting of two points $0$ and $1$ and a third non trivial open set $\{1\}$. If a topological space $T$ is locally compact, then there exists in \textbf{TOP}  the function space $\Sigma^{T}$ (see \cite{TopHyland} and \cite{topo}). $\Sigma$ extends the subobjects classifier from \textbf{Set} to \textbf{TOP} in the sense that for every $\phi$, open set of $T$, the characteristic function of the inclusion $\lfloor \phi \rfloor$ is the unique arrow making the following a pullback
$$
\xymatrix{
X\ar[d]_-{\lfloor \phi \rfloor}\ar[r]^-{!}&1\ar[d]^-{\top}\\
T\ar[r]_-{\chi_{\phi}}&\Sigma
}
$$
for which it holds that $\chi_{\phi}^{-1}(\{1\}) = \phi$. Now for every topological space $A$ consider any construction that produces a larger locally compact space $\tilde{A}$ such that the inclusion morphism $i_A: A\hookrightarrow \tilde{A}$ is continuos and open, e.g. Alexandroff compactifications, see \cite{topo}; the following lemma holds:
%\item[a)] if $f\times id_B:\tilde{A}\times B \arr Y\times B$ is continuous, then $f_{|_A}\times id_B : A\times B \arr Y\times B$ is continuous
if $f:A\times B \arr \Sigma$ is continuous, then the extension $\tilde{f}: \tilde{A}\times B \arr \Sigma$ is continuous, where $\tilde{f}(a,b) = f(a,b)$ if $a\ \epsilon\ A$, then $\tilde{f}(a,b) = 0$.\\
To prove the lemma it suffices to note that there are no open sets in $\Sigma$ containing the point $0$ other than the top element, then $\tilde{f}^{-1}(\{1\}) = f^{-1}(\{1\})$ and the inclusion function is open. Note that $f=\tilde{f}\circ (i_A\times id)$. Now consider the diagram
$$
\xymatrix{
A\times \Sigma^{\tilde{A}}\ar@{^{(}->}[rr]^-{i_A \times id} &&\tilde{A}\times \Sigma^{\tilde{A}}\ar[r]^-{ev_{\tilde{A}}}&\Sigma\\
A\times B\ar[u]^-{id_{A}\times \overline{\tilde{\chi}_{\phi}}}\ar@{^{(}->}[rr]_-{i_A \times id} && \tilde{A}\times B\ar[u]^-{id_{\tilde{A}}\times \overline{\tilde{\chi}_{\phi}}} \ar[ru]_-{\tilde{\chi}_{\phi}} &
}
$$
define $\in_A := (ev_{\tilde{A}} \circ (i_A \times id))^{-1}(\{1\})$ and for every open set $\phi$ in $A\times B$ define $\{\phi\} := \overline{\tilde{\chi}_{\phi}}$ the exponential transpose of the extension of $\chi_{\phi}$. $(id_A \times \overline{\tilde{\chi}_{\phi}})^{-1}(\in_A) = \chi_{\phi}^{-1}(\{1\}) = \phi$.\\
\\The doctrine fails to be elementary. Given a topological space $X$, we have that $\delta_X$ should be the smallest open set $U$ of $X \times X$ such that $X \subseteq \Delta^{-1}(U)$. In other words $$\delta_X = (\bigcap_{X\subseteq \Delta_X^{-1}(U)}U)^{o}$$ if $X$ is the interval $[0,1]$ with the euclidean topology, then $\delta_X$ would be empty.
\end{exa}
%\begin{exa}\label{sigma}(\textbf{$\sigma$-Algebras})
%Consider the category \textbf{$\sigma$-Alg} of $\sigma$-algebras and measurable functions and take the functor $\textbf{$\sigma$-Alg}^{op} \arr \textbf{ISL}$ determined by the following asignment $(A, \mathcal{F}(A)) \mapsto \mathcal{F}(A)$. This is another non elementary doctrine analogous to the one in example \ref{TOP}.\\
%\end{exa}%%%%%%%%%%%%%%
%%%%%%%%%%%%%%
%%%%%%%%%%%%%%
%%%%%%%%%%%%%%
\section{Quotients and descents}\label{sec2}
 Recall a construction presented in \cite{RM2,RM}, which is based on the notion of equivalence relation in a doctrine.
\begin{deff}\label{equiv} Given a primary doctrine $P:\C^{op}\arr \textbf{ISL}$ and an object $A$ of $\C$, an element $\rho$ in $P(A\times A)$ is said to be an \textbf{equivalence relation} on $A$ if\\
\\
reflexivity: $\top_A \le \Delta_A^* (\rho)$\\
\\
symmetry: $\rho \le \langle \pi_1, \pi_2 \rangle^*(\rho)$\\
\\
transitivity: $\langle \pi_1, \pi_2 \rangle^*(\rho)\wedge \langle \pi_2, \pi_3 \rangle^*(\rho) \le \langle \pi_1, \pi_3 \rangle^*(\rho)$
\end{deff}
Note that if the doctrine $P$ is also elementary, then $\delta_A$ is an equivalence relation on $A$ for every object $A$ in $\C$.\\
\\
In \cite{RM2,RM} the authors consider a certain category $\mathcal{Q}_P$, when $P:\C^{op}\arr \textbf{ISL}$ is elementary. In the category $\Q_P$\\
\\
\textbf{objects} are pairs $(A,\rho)$ such that $\rho$ is an equivalence relation on $A$\\
\\
\textbf{morphisms} $f:(A,\rho)\arr(B,\sigma)$ are %equivalence classes of 
arrows $f:A\arr B$ in $\C$ such that $\rho \le (f\times f)^*\sigma$\\
\\
and composition is given as in $\C$.\\
\\
A first remark is that the construction gives a category in the more general case of $P$ primary. The category $\mathcal{Q}_P$ has binary products: given $(A,\rho)$ and $(B,\sigma)$ in $\mathcal{Q}_P$, $(A,\rho)\times(B,\sigma)\colon =(A\times B, \rho\boxtimes \sigma)$, where $\rho \boxtimes \sigma$ is $\langle\pi_1,\pi_3\rangle^*\rho \wedge\ \langle\pi_2,\pi_4\rangle^*\sigma$. Moreover
if $\C$ has a terminal object, $\mathcal{Q}_P$ has a terminal object.\\
\\
%%%%%%%%%%%%%
%There is an obvious forgetful functor $\mathbb{U}:\Q_P\arr \C$, determined by the following assigment 
%$$\xymatrix@R=1.5ex@C=1.8ex{
%(A, \rho)\ar[dd]_-{f}&&A\ar[dd]^-{f}\\
%&\mapsto&\\
%(B, \sigma)&&B
%}
%$$
%\begin{lem}\label{left} Given an elementary doctrine $P:\C^{op} \arr \textbf{ISL}$, the functor $\mathbb{U}$ has a left adjoint $\nabla$.
%\end{lem}
%\begin{proof} The functor $\nabla$ is defined as
%$$
%\xymatrix@R=1.5ex@C=1.8ex{
%A\ar[dd]_-{f}&&(A, \delta_A)\ar[dd]^-{f}\\
%&\mapsto&\\
%B&&(B, \delta_B)
%}
%$$
%This clearly determine a functor from $\C$ to $\Q_P$ since, for every morphism $f$, $\delta_A \le (f\times f)^* \delta_B$. For every object $(B,\sigma)$ in $\Q_P$, the map $\varepsilon_B \colon = id_B : (B,\delta_B)\arr (B,\sigma)$ is the $B$-component of a natural transformation. This is the counite of the adjunction, since for every object $A$ in $\C$ and every arrow $f:(A,\delta_A)\arr (B,\sigma)$ in $\Q_P$ the diagram commutes
%$$
%\xymatrix{
%(B,\delta_B)\ar[r]^{id_B}&(B,\sigma)\\
%(A,\delta_A)\ar[u]^{f}\ar[ru]_{f}&
%}
%$$
%and $f$ is the unique such arrow.
%\end{proof}
%%%%%%%%%%%%
There is an obvious forgetful functor $\mathbb{U}:\Q_P\arr \C$, and a functor $\nabla: \C \arr \Q_P$, determined by the following assignments
$$\xymatrix@R=1.5ex@C=1.8ex{
&(A, \rho)\ar[dd]_-{f}&&A\ar[dd]^-{f}\\
(\mathbb{U})&&\mapsto&\\
&(B, \sigma)&&B
}
\qquad\ \quad \ \qquad
\xymatrix@R=1.5ex@C=1.8ex{
&A\ar[dd]_-{f}&&(A, \delta_A)\ar[dd]^-{f}\\
(\nabla)&&\mapsto&\\
&B&&(B, \delta_B)
}
$$
$\nabla$ is clearly a functor since, for every morphism $f$ in $\C$, $\delta_A \le (f\times f)^* \delta_B$. 
\begin{lem}\label{left} Given an elementary doctrine $P:\C^{op} \arr \textbf{ISL}$, the functor $\nabla$ is left adjoint to $\mathbb{U}$.
\end{lem}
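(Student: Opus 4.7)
The plan is to exhibit an explicit bijection $\Q_P((A,\delta_A),(B,\sigma)) \cong \C(A,\mathbb{U}(B,\sigma))$ natural in $A$ and $(B,\sigma)$. One direction is free: a morphism $f:(A,\delta_A)\arr(B,\sigma)$ in $\Q_P$ is by definition a $\C$-arrow $f:A\arr B$, so we simply take $\mathbb{U}$. What must be proved is that every $\C$-arrow $f:A\arr B$ automatically lifts to a $\Q_P$-arrow, i.e.\ satisfies the inequality $\delta_A \le (f\times f)^*\sigma$; then the two assignments are mutually inverse, and naturality in both arguments is immediate since the bijection is identity on underlying data.

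To produce the required inequality I would use the reflexivity of $\sigma$ together with the defining adjunction from \ref{ele}~i). First, from $\top_B \le \Delta_B^*(\sigma)$, applying the monotone map $f^*$ and using the equality $(f\times f)\circ\Delta_A = \Delta_B\circ f$, I get
\[
\top_A \;=\; f^*(\top_B) \;\le\; f^*\Delta_B^*(\sigma) \;=\; \Delta_A^*\bigl((f\times f)^*\sigma\bigr).
\]
By the elementary hypothesis, the functor $\pi_1^*(-)\wedge \delta_A$ is left adjoint to $\Delta_A^*$, so the previous display is equivalent to
\[
\delta_A \;=\; \pi_1^*(\top_A)\wedge \delta_A \;\le\; (f\times f)^*\sigma,
\]
which is exactly the condition that $f$ defines a morphism $(A,\delta_A)\arr(B,\sigma)$ in $\Q_P$.

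This gives the inverse assignment. Composing the two, a $\Q_P$-arrow is sent to its underlying $\C$-arrow and then lifted back to the very same underlying arrow; conversely, starting from $f\in\C(A,B)$, the lift and then $\mathbb{U}$ returns $f$. Naturality in $A$ (along arrows in $\C$) and in $(B,\sigma)$ (along arrows in $\Q_P$) follows because composition in $\Q_P$ is computed as in $\C$ and the bijection is the identity on data; in particular the unit of the adjunction is the identity $1_A:A\arr\mathbb{U}\nabla A$ and the counit is the identity arrow $(B,\sigma)\arr(B,\sigma)$ of $\Q_P$, viewed as $\nabla\mathbb{U}(B,\sigma)=(B,\delta_B)\arr(B,\sigma)$, which is indeed a $\Q_P$-morphism since $\delta_B\le \sigma$ by reflexivity of $\sigma$ and the same argument above applied to $f=\mathrm{id}_B$.

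The only non-routine step is the lifting argument above, and its core is a single use of the adjunction $\pi_1^*(-)\wedge\delta_A \dashv \Delta_A^*$; no further delicate computation is needed, so I do not expect any real obstacle.
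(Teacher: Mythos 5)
Your proof is correct and amounts to the same argument as the paper's: the adjunction is exhibited with identity underlying data, the counit being $id_B:(B,\delta_B)\to(B,\sigma)$, and the only substantive point is that any $\C$-arrow $f:A\to B$ automatically satisfies $\delta_A\le (f\times f)^*\sigma$. You actually make explicit, via reflexivity of $\sigma$ and the adjunction $\pi_1^*(-)\wedge\delta_A\dashv\Delta_A^*$, the inequality that the paper leaves implicit (it only records $\delta_A\le(f\times f)^*\delta_B$ before the lemma and silently uses $\delta_B\le\sigma$ for the counit), so your write-up is, if anything, more complete.
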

\begin{proof}
For every object $(B,\sigma)$ in $\Q_P$, the map $\varepsilon_B \colon = id_B : (B,\delta_B)\arr (B,\sigma)$ is the $B$-component of a natural transformation. This is the counite of the adjunction, since for every object $A$ in $\C$ and every arrow $f:(A,\delta_A)\arr (B,\sigma)$ in $\Q_P$ the diagram commutes
$$
\xymatrix{
(B,\delta_B)\ar[r]^{id_B}&(B,\sigma)\\
(A,\delta_A)\ar[u]^{f}\ar[ru]_{f}&
}
$$
and $f$ is the unique such arrow.
\end{proof}
%%%%%%%%%%%%
\begin{deff} Given a primary doctrine $P:\C^{op}\arr \textbf{ISL}$ and an equivalence relation $\rho$ on an object $A$ of $\C$, the poset of descent data $\mathcal{D}es_{\rho}$ is the sub-order of $P(A)$ made by those $\alpha$ such that $$\pi_1^*(\alpha)\ \wedge\ \rho\ \le\ \pi_2^*(\alpha)$$
\end{deff} 
The order $\mathcal{D}es_{\rho}$ is closed under meets and it has trivially $\top_A$, then $\mathcal{D}es_{\rho}$ is an inf-semilattice.\\
The following proposition generalizes to primary doctrines a similar result given for elementary doctrine in \cite{RM2, RM}.
\begin{prop}\label{lefto} Given a primary doctrine $P:\C^{op}\arr \textbf{ISL}$, the assignment
$$
\xymatrix@R=1.5ex@C=1.8ex{
(A,\rho)\ar[dd]_{f}&&\mathcal{D}es_{\rho}\\
&\mapsto&\\
(B,\sigma)&&\mathcal{D}es_{\sigma}\ar[uu]_{f^*}
}
$$
determines a primary doctrine $P_{\mathcal{D}}:\mathcal{Q}_P^{op}\arr \textbf{ISL}$.
\end{prop}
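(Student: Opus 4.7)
The plan is to verify the three ingredients of a primary doctrine $P_\mathcal{D}$: that $\mathcal{Q}_P$ has binary products (already established in this section), that each fiber $\mathcal{D}es_\rho$ is an inf-semilattice (noted immediately after the definition, since $\top_A \in \mathcal{D}es_\rho$ trivially and $\mathcal{D}es_\rho$ is closed under the binary meets of $P(A)$), and that the assignment $f \mapsto f^*$ is well defined and contravariantly functorial. All the actual work sits in the last item.

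For well-definedness, fix a morphism $f:(A,\rho)\arr(B,\sigma)$ in $\mathcal{Q}_P$ and $\alpha \in \mathcal{D}es_\sigma$. I need to show $f^*\alpha \in \mathcal{D}es_\rho$, i.e. $\pi_1^*(f^*\alpha) \wedge \rho \le \pi_2^*(f^*\alpha)$ in $P(A\times A)$. The idea is to transport the descent inequality $\pi_1^*\alpha \wedge \sigma \le \pi_2^*\alpha$ from $P(B\times B)$ to $P(A\times A)$ along $f\times f$. Since $\pi_i \circ (f\times f) = f \circ \pi_i$, functoriality of $P$ gives $(f\times f)^* \pi_i^* = \pi_i^* f^*$, hence applying $(f\times f)^*$ yields
\[
\pi_1^* f^*\alpha \,\wedge\, (f\times f)^*\sigma \,\le\, \pi_2^* f^*\alpha.
\]
Now the defining condition of a morphism in $\mathcal{Q}_P$ gives $\rho \le (f\times f)^*\sigma$, so the left-hand side dominates $\pi_1^* f^*\alpha \wedge \rho$, which completes the inequality.

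To see that the restricted map $f^*:\mathcal{D}es_\sigma \arr \mathcal{D}es_\rho$ is an \textbf{ISL}-morphism I just invoke that the ambient $f^*:P(B)\arr P(A)$ preserves binary meets and $\top$, and these operations in the fibers of $P_\mathcal{D}$ are inherited from $P$. Contravariant functoriality $(g\circ f)^* = f^* \circ g^*$ and $id^* = id$ are immediate from the corresponding properties of $P$.

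I do not anticipate any serious obstacle here: the only non-routine point is the single cancellation step where one replaces the reindexed $(f\times f)^*\sigma$ by the smaller $\rho$, and that substitution is exactly what the morphism condition in $\mathcal{Q}_P$ was designed to license. Everything else is a transparent bookkeeping check that passes through verbatim from the primary doctrine $P$.
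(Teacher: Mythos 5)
Your proof is correct and follows exactly the paper's argument: the only substantive point is that $f^*$ carries $\mathcal{D}es_\sigma$ into $\mathcal{D}es_\rho$, which you establish precisely as the paper does, by applying $(f\times f)^*$ to the descent condition on $\alpha$ and then replacing $(f\times f)^*\sigma$ by the smaller $\rho$ via the morphism condition in $\mathcal{Q}_P$. The remaining bookkeeping you spell out (meet-preservation, functoriality) is left implicit in the paper but is the same routine verification.
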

\begin{proof}
It suffices to note that, for every $\beta$ in $\D es_{\sigma}$, $f^* \beta$ is in $\D es_{\rho}$, that can be proved by taking the descent condition on $\beta$, applying to both sides $(f\times f)^*$ and use the fact that $\rho\ \le\ (f\times f)^*\sigma$.
\end{proof}
%%%%%%%%%%%%%%
%%%%%%%%%%%%%%
%%%%%%%%%%%%%%
%%%%%%%%%%%%%%
\section{A co-free construction}\label{sec3}

There is an obvious forgetful functor $\mathcal{U}: \textbf{ED} \arr \textbf{PD}$, which maps every elementary doctrine to itself. We shall show that the construction in \ref{sec2} extends to a 2-right adjoint to it.\\
\\
The following lemma is a strengthening of a similar result in \cite{RM2}.
\begin{lem}\label{elem} Given a primary doctrine $P:\C^{op}\arr \textbf{ISL}$, the doctrine $P_{\mathcal{D}}:\mathcal{Q}_P^{op}\arr \textbf{ISL}$ built as in \ref{lefto} is elementary.
\end{lem}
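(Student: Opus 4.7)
The natural candidate for the elementary structure is $\delta_{(A,\rho)}:=\rho$, i.e.\ the equivalence relation itself, regarded as an element of $P(A\times A)$. The plan has three parts: (1) check that $\rho$ belongs to $\mathcal{D}es_{\rho\boxtimes\rho}$ so that it is a legitimate element of the fiber $P_{\mathcal{D}}((A,\rho)\times(A,\rho))$; (2) verify condition (i) of Definition~\ref{ele} by checking the adjunction for $\Delta^{*}_{(A,\rho)}$; (3) verify condition (ii) by the analogous computation with an extra parameter $(X,\tau)$.

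For (1), writing coordinates of $A^{4}$ via the four projections $\pi_{1},\ldots,\pi_{4}$, the descent condition on $\rho$ unfolds to
\[
\langle\pi_{1},\pi_{2}\rangle^{*}\rho\,\wedge\,\langle\pi_{1},\pi_{3}\rangle^{*}\rho\,\wedge\,\langle\pi_{2},\pi_{4}\rangle^{*}\rho\ \le\ \langle\pi_{3},\pi_{4}\rangle^{*}\rho,
\]
which follows from one use of symmetry (turning $\langle\pi_{1},\pi_{3}\rangle^{*}\rho$ into $\langle\pi_{3},\pi_{1}\rangle^{*}\rho$) and two uses of transitivity. For (2), I would propose as left adjoint to $\Delta^{*}_{(A,\rho)}$ the map $\alpha\mapsto\pi_{1}^{*}\alpha\wedge\rho$; first I would check this lands in $\mathcal{D}es_{\rho\boxtimes\rho}$, which reduces to the same transitivity-plus-symmetry manipulation of (1) together with the descent condition on $\alpha$ pulled back along $\langle\pi_{1},\pi_{3}\rangle$. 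The easy direction of the adjunction follows by pulling back along $\Delta_{A}$ and using reflexivity $\top_{A}\le\Delta_{A}^{*}\rho$.

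The converse direction is the delicate step and the main obstacle: from $\alpha\le\Delta_{A}^{*}\beta$ I need $\pi_{1}^{*}\alpha\wedge\rho\le\beta$ in $P(A\times A)$. The trick is to use the descent condition of $\beta$ pulled back along the morphism $h:=\langle\pi_{1},\pi_{1},\pi_{1},\pi_{2}\rangle\colon A\times A\to A^{4}$: the composites of $h$ with $\langle\pi_{1},\pi_{2}\rangle$ and with $\langle\pi_{3},\pi_{4}\rangle$ give $\langle\pi_{1},\pi_{1}\rangle$ and the identity respectively, while $\langle\pi_{1},\pi_{3}\rangle\circ h=\langle\pi_{1},\pi_{1}\rangle$ and $\langle\pi_{2},\pi_{4}\rangle\circ h=\mathrm{id}$, so reflexivity collapses the unwanted factors and leaves $\langle\pi_{1},\pi_{1}\rangle^{*}\beta\wedge\rho\le\beta$; combined with $\pi_{1}^{*}\alpha\le\langle\pi_{1},\pi_{1}\rangle^{*}\beta$ (obtained by pulling the hypothesis along $\pi_{1}$) this gives the desired inequality. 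The identification of the right morphism $h$ is what makes the argument go through and is the only non-routine insight.

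For (3) the same pattern works with an extra factor $(X,\tau)$ present everywhere. I would define $\exists_{e}(\alpha):=\langle\pi_{1},\pi_{2}\rangle^{*}\alpha\wedge\langle\pi_{2},\pi_{3}\rangle^{*}\rho$ on $X\times A\times A$ for $\alpha\in\mathcal{D}es_{\tau\boxtimes\rho}$, check membership in $\mathcal{D}es_{\tau\boxtimes\rho\boxtimes\rho}$ by the analogue of the computation in~(1) (now involving the extra relation $\tau$, which only needs reflexivity and the descent condition on $\alpha$), and verify the adjunction by the analogue of the computation in~(2), replacing the morphism $h$ by its obvious variant that is the identity on the $X$-factor and $\langle\pi_{1},\pi_{1},\pi_{1},\pi_{2}\rangle$ on the $A$-factors. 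Since the $X$-coordinates are inert the whole calculation reduces mechanically to the one carried out in~(2), so no further ideas are needed.
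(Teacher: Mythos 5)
Your proposal is correct and follows essentially the same route as the paper: take $\delta_{(A,\rho)}:=\rho$, define the left adjoint as $\pi_1^*\alpha\wedge\rho$, and obtain the hard direction by pulling the descent condition of $\beta$ back along $\langle\pi_1,\pi_1,\pi_1,\pi_2\rangle$ (the paper writes this same map as the composite $(\Delta_A\times id_A\times id_A)\circ(\Delta_A\times id_A)$) and discharging the superfluous factors by reflexivity, with the parametrized case (ii) handled by the corresponding interleaved diagonal. Your explicit check that $\pi_1^*\alpha\wedge\rho$ lands in $\mathcal{D}es_{\rho\boxtimes\rho}$ is a detail the paper leaves implicit, but it does not change the argument.
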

\begin{proof}
Consider $(A, \rho)$ in $\Q_P$. Note that $\rho$ is an element of $\mathcal{D}es_{\rho \boxtimes\rho}$, since
$$\pi_1^*\rho \wedge (\rho\boxtimes\rho) = \langle \pi_1,\pi_2\rangle^*\rho \wedge\ \langle\pi_1,\pi_3\rangle^*\rho\wedge\ \langle\pi_2,\pi_4\rangle^*\rho$$
and by transitivity of $\rho$
$$\pi_1^*\rho \wedge \rho\boxtimes\rho \ \le \langle\pi_3,\pi_4\rangle^*\rho \ = \ \pi_2^*\rho$$
Let $\delta_{(A,\rho)}$ be $\rho$ and define $\exists_{\Delta_A} \alpha \colon = \pi_1^*\alpha\ \wedge \rho$. We want to prove that, for every $\alpha$ in $\D es_{\rho}$ and $\beta$ in $\D es_{\rho\boxtimes\rho}$, $\exists_{\Delta_A} \alpha \le \beta$ if and only if $\alpha \le \Delta_A^*\beta$. Suppose $\exists_{\Delta_A} \alpha \le \beta$, which means $\pi_1^*(\alpha)\ \wedge \rho\ \le \beta$, and apply $\Delta_A^*$ to both sides, to obtain $\alpha \wedge \Delta_A^*\rho\ \le \Delta_A^*\beta$. So $\alpha \le \Delta_A^*\beta$, by reflexivity of $\rho$. Assume now $\alpha \le \Delta_A^*\beta$, the descent condition for $\beta$ gives:
$$\langle\pi_1,\pi_2\rangle^*\beta\wedge\langle\pi_1,\pi_3\rangle^*\rho\wedge\ \langle\pi_2,\pi_4\rangle^*\rho\ \le\ \langle\pi_3,\pi_4\rangle^*\beta$$
By reindexing along $(\Delta_A\times id_A\times id_A)^*$ and $(\Delta_A\times id_A)^*$ one obtains $$\pi_1^* \Delta_A^* \beta \ \wedge \rho\ \le \beta$$ by reflexivity of $\rho$
\[
\AxiomC{$\alpha \le \Delta_A^* \beta$}
\UnaryInfC{$\pi_1^*\alpha \le \pi_1^*\Delta_A^* \beta$}
\UnaryInfC{$\pi_1^*\alpha \ \wedge \ \rho \ \le \  \pi_1^*\Delta_A^* \beta \ \wedge \ \rho$}
\AxiomC{$\pi_1^* \Delta_A^* \beta \ \wedge \rho\ \le \beta$}
\BinaryInfC{$\pi_1^*\alpha \ \wedge \ \rho \ \le \ \beta$}
\UnaryInfC{$\exists_{\Delta_A}\alpha \ \le \ \beta$}
\DisplayProof
\]
%%%%%%%%
To verify the conditions ii) of \ref{ele}, consider an object $(X, \tau)$ and let $e\colon = id_X \times \Delta_A$ be a morphism in $\mathcal{Q}_P$. The proof that if  $\exists_e(\alpha) \le \beta$, then $\alpha \le e^*(\beta)$ is similar to that in example \ref{Tripos} (where $\rho$ is $\delta_A$). The proof of the converse, is essentially as before where: $$\langle\pi_1,\pi_2,\pi_3,\rangle^*\beta \ \wedge\ \langle\pi_1,\pi_4\rangle^*\tau \ \wedge \ \langle \pi_2,\pi_5\rangle^*\rho \ \wedge \ \langle\pi_3,\pi_6\rangle^*\rho\ \le \ \langle\pi_4,\pi_5,\pi_6\rangle^* \beta$$ and reindexing along the following composition
$$
\xymatrix{
X\times A\times A\ar[d]^-{id_X\times \Delta_A \times id_A}&&X\times A\times A\times X\times A\times A\\
X\times A\times A \times A\ar@/_1.5pc/[dr]^-{\ \ \ \ \ \Delta_X \times id_A\times id_A \times id_A}&&X\times A\times X\times A \times A\ar[u]^-{id_X \times \Delta_A \times id_X \times id_A \times id_A}\\
&X\times X\times A\times A \times A\ar@/_1.5pc/[ru]^-{id_X \times tw \times id_A \times id_A\ \ \ \ \ \ \ }&
}
$$
%to obtain the inequality $$\langle\pi_1,\pi_2\rangle^* (id_X \times \Delta_A)^* \beta \ \wedge \ \langle \pi_2, \pi_3 \rangle^*\rho\ \le \ \beta$$
%which we use in the following tree
%\[
%\AxiomC{$\alpha \le (id_X \times \Delta_A)^* \beta$}
%\UnaryInfC{$\langle\pi_1,\pi_2\rangle^*\alpha \ \le \  \langle\pi_1,\pi_2\rangle^*(id_X\times \Delta_A)^* \beta$}
%\UnaryInfC{$\langle\pi_1,\pi_2\rangle^*\alpha \ \wedge \ \langle\pi_2,\pi_3\rangle^*\rho\ \le \  \langle\pi_1,\pi_2\rangle^*(id_X\times \Delta_A)^* \beta \ \wedge \ \langle\pi_2,\pi_3\rangle^*\rho$}
%\UnaryInfC{$\langle\pi_1,\pi_2\rangle^*\alpha \ \wedge \ \langle\pi_2,\pi_3\rangle^*\rho \ \le \ \beta$}
%\UnaryInfC{$\exists_{e}\alpha \ \le \ \beta$}
%\DisplayProof
%\]
\end{proof}

%Recall from section \ref{sec3} that, given a primary doctrine $P:\C^{op}\arr \textbf{ISL}$, the doctrine $P_\mathcal{D}$, which maps every object of $\Q_P$ in the inf-semilattice of descent data, is elementary (proposition \ref{elem}).\\ 
Given a 1-morphism in $\PD$, $(F,f) : P \arr R$, consider the functor $F_\mathcal{D}$ defined by the following assignment
$$
\xymatrix@R=1.5ex@C=1.8ex{
(A,\rho)\ar[dd]_{q}&&(FA, <\pi_1,\pi_2>^*f_{A \times A} (\rho))\ar[dd]^{Fq}\\ 
&\mapsto&\\
(B,\sigma)&&(FB, <\pi_1,\pi_2>^*f_{B \times B} (\sigma))
}
$$
and the $\Q_P$-indexed family of arrow $f_\mathcal{D}$ whose $(A, \rho)$-component is the restriction of $f_A: P(A)\arr R(FA)$ to $\mathcal{D}es_{\rho}$
\begin{lem}\label{elemor}
Given a 1-morphism in \textbf{PD}, $(F,f) : P \arr R$ the pair $(F_\mathcal{D},f_\mathcal{D}):P_\mathcal{D} \arr R_\mathcal{D}$ determines a 1-morphism in \textbf{ED}.
\end{lem}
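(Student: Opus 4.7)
The statement packages four verifications: (i) $F_{\mathcal{D}}$ is a well-defined functor $\mathcal{Q}_P \arr \mathcal{Q}_R$ on objects and arrows; (ii) $F_{\mathcal{D}}$ preserves binary products, as required of any 1-arrow in $\textbf{PD}$; (iii) $f_{\mathcal{D}}$ is a natural transformation from $P_{\mathcal{D}}$ to $R_{\mathcal{D}} \circ F_{\mathcal{D}}$; and (iv) the elementary coherence $(f_{\mathcal{D}})_{(A,\rho)\times(A,\rho)}(\delta_{(A,\rho)}) = \ple{F_{\mathcal{D}}\pi_1, F_{\mathcal{D}}\pi_2}^*\delta_{F_{\mathcal{D}}(A,\rho)}$. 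The whole proof rests on two ingredients used over and over: $f$ is a natural transformation of inf-semilattice valued functors (so it preserves $\wedge$ and $\top$ and commutes with every reindexing), and $F$ preserves products (so that the canonical comparison $\ple{F\pi_1,F\pi_2}:FA\times FA \arr F(A\times A)$ is an iso and, more generally, $F$ respects tuplings of projections up to that iso).

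For (i), I would transport the three equivalence-relation axioms for $\rho$ through $f_{A\times A}$: reflexivity $\top_A \le \Delta_A^*\rho$ becomes $\top_{FA}\le \Delta_{FA}^*\ple{F\pi_1,F\pi_2}^* f_{A\times A}(\rho)$, because $f_A$ preserves $\top$, $f$ is natural along $\Delta_A$, and $F\Delta_A$ factors through $\Delta_{FA}$ via the product-preservation iso; symmetry and transitivity are analogous, using naturality along $\ple{\pi_i,\pi_j}$. For a morphism $q:(A,\rho)\arr(B,\sigma)$ in $\mathcal{Q}_P$, apply $f_{A\times A}$ to $\rho\le(q\times q)^*\sigma$; naturality of $f$ along $q\times q$ together with $F(q\times q) = Fq\times Fq$ (up to the product iso) yields the inequality needed for $Fq$ to live in $\mathcal{Q}_R$, and functoriality of $F_{\mathcal{D}}$ descends from that of $F$. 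For (ii), expand $\rho\boxtimes\sigma = \ple{\pi_1,\pi_3}^*\rho\wedge \ple{\pi_2,\pi_4}^*\sigma$, apply $f_{(A\times B)\times(A\times B)}$, and combine $f$-naturality with $F$'s product preservation to read off $F_{\mathcal{D}}((A,\rho)\times(B,\sigma)) = F_{\mathcal{D}}(A,\rho)\times F_{\mathcal{D}}(B,\sigma)$. For (iii), starting from the descent condition $\pi_1^*\alpha\wedge\rho\le\pi_2^*\alpha$, apply $f_{A\times A}$ and use naturality plus product preservation to obtain $\pi_1^* f_A(\alpha)\wedge \ple{F\pi_1,F\pi_2}^* f_{A\times A}(\rho)\le\pi_2^* f_A(\alpha)$ in $R(FA\times FA)$, showing that $f_A(\alpha)$ is a descent datum over $F_{\mathcal{D}}(A,\rho)$; naturality of $f_{\mathcal{D}}$ in $\mathcal{Q}_P$-morphisms is then the naturality of $f$ restricted to descent data.

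For (iv), the preceding lemma gives $\delta_{(A,\rho)} = \rho$ in $P_{\mathcal{D}}$; applied to $R_{\mathcal{D}}$ at the object $F_{\mathcal{D}}(A,\rho) = (FA,\ple{F\pi_1,F\pi_2}^* f_{A\times A}(\rho))$ it also gives $\delta_{F_{\mathcal{D}}(A,\rho)} = \ple{F\pi_1,F\pi_2}^* f_{A\times A}(\rho)$. Since $(f_{\mathcal{D}})_{(A,\rho)\times(A,\rho)}$ is the restriction of $f_{A\times A}$ to descent data and $\ple{F_{\mathcal{D}}\pi_1, F_{\mathcal{D}}\pi_2}$ has underlying $\mathbb{D}$-arrow $\ple{F\pi_1, F\pi_2}$, both sides evaluate to $\ple{F\pi_1,F\pi_2}^* f_{A\times A}(\rho)$ on the nose. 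The main obstacle is not conceptual but notational: one must keep the product-preservation iso $\ple{F\pi_1,F\pi_2}:FA\times FA\arr F(A\times A)$ visible throughout and apply $f$-naturality along the correct reindexing; once that bookkeeping is done, each of the four verifications is essentially a single line.
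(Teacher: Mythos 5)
Your proof is correct and follows essentially the same route as the paper's: apply $f$ to the defining inequalities (equivalence-relation axioms, the morphism condition $\rho\le(q\times q)^*\sigma$, the descent condition) and commute it past reindexings by naturality, then observe that the elementary coherence reduces to the identity $\langle\pi_1,\pi_2\rangle\circ\langle F\pi_1,F\pi_2\rangle=id_{F(A\times A)}$ coming from product preservation. You are in fact slightly more thorough than the paper, which leaves the product-preservation of $F_{\mathcal{D}}$ and the equivalence-relation check implicit; the only caveat is to keep the direction of the comparison isomorphism between $FA\times FA$ and $F(A\times A)$ consistent, since the two fibers in the coherence condition are indexed by different (isomorphic) objects.
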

\begin{proof}
 First note that $<\pi_1,\pi_2>^*f_{A \times A} (\rho)$ is an equivalence relation since $\rho$ is and $f$ is natural. $Fq$ is a morphism in $\Q_P$, since $<\pi_1,\pi_2>^*f_{A \times A} (\rho)\le(Fq\times Fq)^*<\pi_1,\pi_2>^*f_{B \times B} (\sigma)= <\pi_1,\pi_2>^*F(q\times q)^*f_{B \times B} (\sigma)= <\pi_1,\pi_2>^*f_{A \times A} (q\times q^*\sigma)$, for naturality of $f$. It is left to show that the images of the restriction is $\mathcal{D}es_{<\pi_1,\pi_2>^*f_{A\times A}(\rho)}$, but this is true since, for $\alpha$ in $\D es_{\rho}$, $\pi_1^* \alpha\ \wedge \rho\ \le \pi_2^* \alpha$, then apply $f_{A\times A}$ to both sides and, recalling that $f_{A\times A}\circ \pi_1^* = \pi_1^* \circ f_A$ for naturality of $f$,  one has $\pi_1^*f_A^* \alpha\ \wedge\ f_{A\times A}(\rho)\ \le\ \pi_2^*f_A^* \alpha$. Now it suffices to reindex both sides along $<\pi_1,\pi_2>$. The last step is to show that $f_\D$ preserves the elementary structure, i.e. $f_{\D(A,\rho)\times(A,\rho)}(\delta_{(A,\rho)})=<F_\D \pi_1,F_\D \pi_2>^*(\delta_{F_\D(A,\rho)})$, which reduces to the following equality $f_{A\times A}(\rho)=<F \pi_1,F \pi_2>^*(<\pi_1,\pi_2>^*f_{A \times A} (\rho))$, where $<\pi_1,\pi_2>\circ <F \pi_1,F \pi_2> = id_{F(A\times A)}$.
%Lemma \ref{elem} ensures that $<\pi_1,\pi_2>^*f_{A\times A} (\rho)$ is $\delta_{FA}$.
\end{proof}
Consider the functor $(-)_\mathcal{D}:\PD\arr \ED$ 
$$
\xymatrix@R=1.5ex@C=1.8ex{
P\ar[dd]_-{(F,f)}&&P_\D \ar[dd]^-{(F_\D,f_\D)}\\
&\mapsto&\\
R&&R_\D
}
$$
For every doctrine $P: \C\arr\textbf{ISL}$ in \textbf{PD} there is a 1-morphism $\varepsilon_P$ from $P_\D$ to $P$ given by the pair $(\mathbb{U}, i)$, where $\mathbb{U}:\Q_P \arr \mathbb{C}$ is the forgetful functor defined before \ref{left}, while the $A$-component of $i$ is the inclusion functor $\D es_{\rho} \hookrightarrow P(A)$.
\begin{prop} The natural transformation $\varepsilon$ is the counit of an adjunction $\mathcal{U}\dashv (-)_\mathcal{D}$.
\end{prop}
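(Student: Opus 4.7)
The plan is to verify the universal property of $\varepsilon_P = (\mathbb{U}, i)$: for any elementary doctrine $R: \mathbb{E}^{op} \arr \textbf{ISL}$ and any 1-morphism $(G, g): \mathcal{U}(R) \arr P$ in $\PD$, I will produce a unique 1-morphism $(\hat{G}, \hat{g}): R \arr P_\D$ in $\ED$ such that $\varepsilon_P \circ (\hat{G}, \hat{g}) = (G, g)$. The definition is forced by the triangle identity: set $\hat{G}(A) := (GA, g_{A \times A}(\delta_A^R))$ on objects, $\hat{G}(f) := Gf$ on arrows, and let $\hat{g}_A$ be the corestriction of $g_A$ to $\mathcal{D}es_{g_{A \times A}(\delta_A^R)} \subseteq P(GA)$.

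Next I would check well-definedness. That $g_{A \times A}(\delta_A^R)$ is an equivalence relation on $GA$ in $P$ uses only that $g$ is a natural $\textbf{ISL}$-morphism, so reflexivity, symmetry and transitivity of $\delta_A^R$ transfer directly. That $\hat{G}(f)$ is a morphism in $\Q_P$ follows from $\delta_A^R \le (f \times f)^* \delta_B^R$ (which holds in any elementary doctrine) combined with naturality of $g$. That $\hat{g}_A(\alpha) \in \mathcal{D}es_{g(\delta_A^R)}$ relies on the Leibniz-type inequality $\pi_1^* \alpha \wedge \delta_A^R \le \pi_2^* \alpha$ in $R$, which follows from the adjunction $\exists_{\Delta_A} \dashv \Delta_A^*$ applied to $\alpha \le \alpha$. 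Product preservation of $\hat{G}$ reduces to the identity $\delta_{A \times A'}^R = \delta_A^R \boxtimes \delta_{A'}^R$, which holds by uniqueness of the equality predicate in an elementary doctrine.

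The main obstacle I anticipate is verifying the preservation-of-$\delta$ condition for $(\hat{G}, \hat{g})$ on the nose, i.e.\ $\hat{g}_{A \times A}(\delta_A^R) = \langle \hat{G}\pi_1, \hat{G}\pi_2\rangle^* \delta_{\hat{G}(A)}^{P_\D}$. Since $G$ preserves products strictly, $\hat{G}\pi_i = \pi_i$, so the right-hand side reindexes along the identity; combined with Lemma \ref{elem}, which identifies $\delta_{\hat{G}(A)}^{P_\D}$ with $g_{A \times A}(\delta_A^R)$, the condition becomes tautological. The triangle identity $\varepsilon_P \circ (\hat{G}, \hat{g}) = (G, g)$ is then immediate: $\mathbb{U} \circ \hat{G} = G$ by construction, and composing the inclusion $i$ with the corestriction $\hat{g}_A$ recovers $g_A$. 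Uniqueness is also forced: any $(\tilde{G}, \tilde{g})$ satisfying the triangle has $\mathbb{U}(\tilde{G}(A)) = GA$ and $i_{GA} \circ \tilde{g}_A = g_A$, hence $\tilde{g}_A = g_A$ as a map into $P(GA)$; the preservation-of-$\delta$ condition in $\ED$ then forces the equivalence-relation component of $\tilde{G}(A)$ to equal $\tilde{g}_{A \times A}(\delta_A^R) = g_{A \times A}(\delta_A^R)$, determining $\tilde{G} = \hat{G}$.

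Finally, to promote this to a 2-adjunction, a 2-cell $\nu: (G, g) \Rightarrow (G', g')$ in $\PD$ is transposed to itself at the level of the underlying natural transformation: each $\nu_A: GA \arr G'A$ lifts to a $\Q_P$-morphism $\hat{G}(A) \arr \hat{G}'(A)$ by applying the 2-cell inequality $\nu^* g \le g'$ at $A \times A$ and using that $\nu_{A \times A} = \nu_A \times \nu_A$ for natural transformations between product-preserving functors; the transferred inequality $\nu_A^* \hat{g}_A(\alpha) \le \hat{g}'_A(\alpha)$ is then just the original 2-cell condition read off through the inclusion $i$.
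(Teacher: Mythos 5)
Your proposal is correct and follows essentially the same route as the paper: your transpose of $(G,g)$ is exactly the paper's $(F_\D\circ\nabla,\ f_\D)$, i.e.\ send $A$ to $(GA,\, g_{A\times A}(\delta_A))$ (the paper inserts the canonical reindexing $\langle\pi_1,\pi_2\rangle^*$ here, since product preservation is only assumed up to isomorphism rather than strictly) and corestrict $g_A$ to the descent data, followed by the same triangle-identity and uniqueness checks. If anything, your uniqueness argument is more careful than the paper's, which claims $(\mathbb{U},i)$ is mono because $\mathbb{U}$ is the identity on objects — not literally so, since $\mathbb{U}$ forgets the equivalence-relation component — whereas you correctly recover that component of $\tilde{G}(A)$ from the $\delta$-preservation condition required of $1$-arrows in $\ED$.
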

\begin{proof}
Note that $\mathcal{U}(P ) = P$; given an elementary doctrine $P:\C^{op}\arr \textbf{ISL}$, a morphism $(F,f): \mathcal{U}(P ) \arr R$ in $\PD$, consider the arrow $(\overline{F}, \overline{f}): P \arr R_\D$ in $\ED$, determined by the following composition
$$
\xymatrix{
\C^{op}\ar@/^1pc/[rrrd]^-{P}="a"\ar[d]_-{\nabla}&&&\\
\Q_P^{op}\ar[rrr]_-{P_\D}="b"\ar[d]_-{F_\D}&&&\textbf{ISL}\\
\Q_R^{op}\ar@/_1pc/[rrru]_-{R_\D}="c"&&&
\ar"a";"b"^{id_{PA}}\ar"b";"c"^-{f_\D}}
$$
then $\overline{F} := F_\D \circ \nabla$ and $\overline{f} := f_\D \circ id_{PA}$. Where the natural transformation $P \arr P_\D \circ \nabla$ is the identity from the fact that $\D es_{\delta_A} = P(A)$. What is left to prove is that $(\overline{F},\overline{f})$ is the unique arrow that makes the following diagram commutes
$$
\xymatrix{
\Q_R^{op}\ar[rd]^-{R_\D}\ar@/^/[rr]^-{(\mathbb{U},i)}&&\mathbb{D}^{op}\ar[dl]^-{R}\\
&\textbf{ISL}&\\
&\C^{op}\ar[u]^-{P}\ar@/_1pc/[ruu]_-{(F,f)}\ar@/^1pc/[uul]^{(\overline{F},\overline{f})}&
}
$$
Commutativity: recall that,  for an object $A$ in $\C$,  $\mathbb{U}(\overline{F})(A)$ is $\mathbb{U}(F_\D(\nabla (A)))$, then follow the assignments below
$$A\mapsto (A,\delta_A)\mapsto (FA,  \delta_{FA}) \mapsto FA$$ moreover $(i \circ \overline{f})_A$ is $i_A \circ f_{\D A} \circ id_{PA}$, then take $\alpha$ in $P(A)$ and follow the assignments
$$\alpha \mapsto f_A (\alpha) \mapsto i(f_A(\alpha))=f_A(\alpha)$$
Uniqueness is given by the fact that $(\mathbb{U}, i)$ is mono, since $\mathbb{U}$ is the identity on objects and morphism and $i$ is an inclusion functor.
\end{proof}
%%%%%%%%%%%%%%
%%%%%%%%%%%%%%
%%%%%%%%%%%%%%
%%%%%%%%%%%%%%
\section{Applications}\label{sec4}
The co-free construction presented in the previous section preserves all the first order predicate structures which are in $P$ in the sense of the following
\begin{prop}\label{des} Given a primary doctrine $P:\C^{op} \arr \textbf{ISL}$ and the elementary doctrine $P_\D:\Q_P^{op}\arr\textbf{ISL}$
\begin{itemize}
\item[(i)] if $P$ has finite distributive joins, so has $P_\D$ and $\varepsilon_P : P_\D \arr P$ preserves them
\item[(ii)] if $P$ is implicational, so is $P_\D$ and $\varepsilon_P$ preserves this
\item[(iii)] if $P$ existential, so is $P_\D$ and $\varepsilon_P$ preserves this
\item[(iv)] if $P$ universal, so is $P_\D$ and $\varepsilon_P$ preserves this
\item[(v)] if $P$ is has (full) comprehensions, so has $P_\D$ and $\varepsilon_P$ preserves them
\end{itemize}
\end{prop}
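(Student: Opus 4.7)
The plan is to exploit that every fibre $P_\D(A,\rho)=\D es_\rho$ is by construction a sub-inf-semilattice of $P(A)$ and that reindexings in $P_\D$ are restrictions of reindexings in $P$. Thus for each item I will verify that the operation on $P(A)$ restricts to $\D es_\rho$ and agrees with the operation in the corresponding fibre of $P_\D$; preservation by $\varepsilon_P=(\mathbb U,i)$ is then automatic, since $i$ is a fibrewise inclusion and $\mathbb U$ is the identity on arrows. Throughout I will use repeatedly that symmetry of $\rho$ upgrades the descent condition on $\alpha\in\D es_\rho$ to the two-sided statement $\pi_2^*\alpha\wedge\rho\le\pi_1^*\alpha$.

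For (i), given $\alpha,\beta\in\D es_\rho$, distributivity and commutation of joins with reindexing in $P$ give $\pi_1^*(\alpha\vee\beta)\wedge\rho=(\pi_1^*\alpha\wedge\rho)\vee(\pi_1^*\beta\wedge\rho)\le\pi_2^*\alpha\vee\pi_2^*\beta=\pi_2^*(\alpha\vee\beta)$, while $\bot$ trivially satisfies descent. For (ii), combining the descent condition on $\beta$ with the two-sided statement on $\alpha$ and the fact that $\pi_1^*$ commutes with $\imply$ yields $\pi_1^*(\alpha\imply\beta)\wedge\rho\wedge\pi_2^*\alpha\le\pi_2^*\beta$, which is the descent condition on $\alpha\imply\beta$ after transposition by $\wedge\dashv\imply$.

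The crux of (iii) and (iv) is the following auxiliary inequality for $\alpha\in\D es_{\rho\boxtimes\sigma}\subseteq P(A\times B)$: reindexing the descent condition on $\alpha$ along $g:=\langle\langle\pi_1,\pi_2\rangle,\langle\pi_1,\pi_3\rangle\rangle:A\times B\times B\to(A\times B)\times(A\times B)$ collapses the $\rho$-factor of $\rho\boxtimes\sigma$ by reflexivity and produces
\[
\langle\pi_1,\pi_2\rangle^*\alpha\ \wedge\ \langle\pi_2,\pi_3\rangle^*\sigma\ \le\ \langle\pi_1,\pi_3\rangle^*\alpha.
\]
For (iii) I set $\exists_\pi\alpha:=\exists_{\pi_B}\alpha$ of $P$; Beck-Chevalley yields $\pi_1^*\exists_{\pi_B}\alpha=\exists_{\langle\pi_2,\pi_3\rangle}\langle\pi_1,\pi_2\rangle^*\alpha$ and $\pi_2^*\exists_{\pi_B}\alpha=\exists_{\langle\pi_2,\pi_3\rangle}\langle\pi_1,\pi_3\rangle^*\alpha$, Frobenius absorbs $\sigma$ into the former, and the auxiliary inequality finishes the descent check; the adjunction, Beck-Chevalley and Frobenius for $P_\D$ then follow by restriction from $P$. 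For (iv), set $\forall_\pi\alpha:=\forall_{\pi_B}\alpha$ and transpose the required inequality across Beck-Chevalley and the $\forall$-adjunction: it reduces to $\langle\pi_2,\pi_3\rangle^*\forall_{\langle\pi_2,\pi_3\rangle}\langle\pi_1,\pi_2\rangle^*\alpha\wedge\langle\pi_2,\pi_3\rangle^*\sigma\le\langle\pi_1,\pi_3\rangle^*\alpha$, which follows from the counit of $\forall_{\langle\pi_2,\pi_3\rangle}\dashv\langle\pi_2,\pi_3\rangle^*$ and the auxiliary inequality.

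For (v), given a comprehension $\lfloor\phi\rfloor:X\to A$ of $\phi\in\D es_\rho$ in $P$, I endow $X$ with the pulled-back equivalence relation $\tau:=(\lfloor\phi\rfloor\times\lfloor\phi\rfloor)^*\rho$; then $\lfloor\phi\rfloor:(X,\tau)\to(A,\rho)$ is a morphism of $\Q_P$ and $\top_X\le\lfloor\phi\rfloor^*\phi$ lives unchanged in $\D es_\tau$. Terminality in $P_\D$ reduces to terminality in $P$ because any $f:(Y,\upsilon)\to(A,\rho)$ factoring as $\lfloor\phi\rfloor\circ h$ in $\C$ automatically satisfies $\upsilon\le(f\times f)^*\rho=(h\times h)^*\tau$; fullness transfers since $\phi\le\psi$ is the same inequality in $P$ and in $\D es_\rho$. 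I expect the main obstacle to be item (iv): without a Frobenius reciprocity for $\forall$, the descent condition on $\forall_{\pi_B}\alpha$ has to be coaxed out by transposing twice against the auxiliary inequality, so constructing and deploying the reindexing along $g$ is really the pivotal step on which the rest hangs.
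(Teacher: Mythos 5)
Your proposal is correct and follows essentially the same route as the paper: each fibre $\mathcal{D}es_\rho \subseteq P(A)$ is shown to be closed under the operation inherited from $P$ (distributivity for joins, symmetry of $\rho$ plus transposition for $\imply$, Beck--Chevalley and Frobenius together with the reflexivity-collapsed descent inequality for $\exists$, the counit of the $\forall$-adjunction with that same inequality for $\forall$, and the pulled-back relation $(\lfloor\phi\rfloor\times\lfloor\phi\rfloor)^*\rho$ for comprehensions), so that preservation by $\varepsilon_P$ is automatic from the fibrewise inclusions. The only differences are cosmetic: you quantify along the projection onto $B$ where the paper uses the projection onto $A$, and you isolate explicitly as an ``auxiliary inequality'' what the paper invokes implicitly as ``the descent condition for $\alpha$''.
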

\begin{proof} (i) Given $\alpha$ and $\beta$ in $P_\D(A,\rho)$, the join $\alpha \lor \beta$ in $P(A)$ is in $\mathcal{D}es_\rho$ by distributivity. (ii) Like before, given $\alpha$ and $\beta$ in $P_\D(A,\rho)$, take $\alpha \imply \beta$ in $P(A)$. To see this is in $P_\D(A,\rho)$, recall that, since $\rho$ is symmetric, the descent condition can be written as $\pi_2^* \alpha \wedge \rho = \pi_1^*\alpha \wedge \rho$. One has that $\pi_1^*(\alpha \imply \beta) \wedge \rho \le \pi_2^*(\alpha \imply \beta)$ if and only if $\pi_1^*(\alpha \imply \beta) \wedge \rho \wedge \pi_2^*\alpha \le \pi_2^*\beta$ if and only if $\pi_1^*(\alpha \imply \beta) \wedge \pi_1^*\alpha \wedge \rho \le \pi_2^*\beta$. (iii) For $\alpha$ in $P_\D(A\times B, \rho \boxtimes \sigma)$, we have $\pi_1^*\exists_{\pi1} (\alpha) \wedge \rho = \exists_{\langle\pi_1,\pi_2\rangle}<\pi_1,\pi_3>^*(\alpha) \wedge \rho$ by Beck-Chevalley. By Frobenius Reciprocity that is equal to $\exists_{\langle\pi_1,\pi_2\rangle}(\langle\pi_1,\pi_3\rangle^*\alpha \wedge \langle\pi_1,\pi_2\rangle^*\rho) \le \exists_{\langle\pi_1,\pi_2\rangle}\langle\pi_2,\pi_3\rangle^*\alpha = \pi_2^*\exists_{\pi1}\alpha$; (iv) we have that $\pi_1^*\forall_{\pi1} (\alpha) \wedge \rho \le \pi_2^*\forall_{\pi1} (\alpha)$ if and only if $\forall_{<\pi_1,\pi_2>} <\pi_1,\pi_3>^*(\alpha) \wedge \rho \le \forall_{<\pi_1,\pi_2>} <\pi_2,\pi_3>^*(\alpha)$. Since $<\pi_1, \pi_2>^* \dashv \forall_{<\pi_1,\pi_2>}  $ the inequality holds if and only if $<\pi_1,\pi_3>^* \alpha \wedge <\pi_1,\pi_2>^*\rho \le <\pi_2,\pi_3>^*\alpha$ which is the descent condition for $\alpha$. (v) Take an element $\alpha$ in $P_\D(A,\rho)$, this is also in $P(A)$, and consider its comprehension $\lfloor\alpha \rfloor: X\arr A$, this produces a comprehension morphism $(X, (\lfloor\alpha \rfloor \times \lfloor\alpha \rfloor)^*\rho)\arr(A,\rho)$ in $\Q_P$. Fullness directly derives from that in $P$. In each case (i)-(iv) we shall show that $P_\D(A, \rho) = \mathcal{D}es_\rho \subseteq P(A)$ is closed under the relevant constructions, thus obtaining immediately preservation by $\varepsilon_P$.
\end{proof}
%%%%%%%%%%%%%%%
In Example \ref{TOP} we presented a doctrine that fails to be implicational since, even though every fiber has pseudo relative complements, they do not distribute under reindexing. Moreover the doctrine is not universal: it has right adjoints along all the projections, but these do not satisfied Beck-Chevalley conditions. The next two propositions show that these two properties are gained with the co-free construction. The first is from \cite{LawDiag} and the second is standard.
\begin{prop}\label{asaf}
If $P:\C^{op} \arr \textbf{ISL}$ is an elementary existential doctrine and every fiber has pseudo relative complements, then $P$ is implicational. 
\end{prop}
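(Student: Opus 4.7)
The plan is to use the generalized existential quantifier constructed in Remark \ref{GQ}: since $P$ is elementary and existential, for every morphism $f\colon A\to B$ in $\C$ the reindexing $f^*$ admits a left adjoint $\exists_f$ satisfying Frobenius reciprocity. Once this adjoint is available, the standard Heyting-calculus argument shows that $f^*$ commutes with the pseudo relative complement present in each fiber, which is exactly what it means for $P$ to be implicational.

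Concretely, I would fix $\alpha,\beta\in P(B)$ and aim to prove the equality $f^*(\alpha\imply\beta)=f^*\alpha\imply f^*\beta$ in $P(A)$ by a Yoneda-style chain of bi-implications. For arbitrary $\gamma\in P(A)$, I would write
\[
\gamma\le f^*(\alpha\imply\beta)
\iff \exists_f\gamma\le\alpha\imply\beta
\iff \exists_f\gamma\wedge\alpha\le\beta,
\]
using the adjunction $\exists_f\dashv f^*$ and the definition of $\imply$ in the fiber $P(B)$. Then I would invoke Frobenius reciprocity of Remark \ref{GQ} to rewrite $\exists_f\gamma\wedge\alpha=\exists_f(\gamma\wedge f^*\alpha)$, yielding
\[
\exists_f\gamma\wedge\alpha\le\beta
\iff \exists_f(\gamma\wedge f^*\alpha)\le\beta
\iff \gamma\wedge f^*\alpha\le f^*\beta
\iff \gamma\le f^*\alpha\imply f^*\beta,
\]
where the last step uses the pseudo complement in the fiber $P(A)$. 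Since $\gamma$ was arbitrary the two elements of $P(A)$ coincide.

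The only real content is the invocation of Frobenius, which Remark \ref{GQ} already provides for every $f$, not just projections; the rest is purely formal manipulation in Heyting algebras. The mild subtlety to watch is that the pseudo complement appearing on the two sides lives in two different fibers ($P(B)$ and $P(A)$), so the Yoneda-type argument is the cleanest way to compare them without committing to explicit formulas. I do not foresee a genuine obstacle: the hypothesis that $P$ is elementary is used exclusively to guarantee the existence of $\exists_f$ for arbitrary $f$, and the existential hypothesis together with Frobenius for projections upgrades, via Remark \ref{GQ}, to Frobenius for all morphisms.
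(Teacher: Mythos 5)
Your proposal is correct and follows essentially the same route as the paper: both rest on the generalized quantifier $\exists_f$ of Remark \ref{GQ} together with its Frobenius reciprocity, combined with the adjunctions defining $\exists_f$ and the pseudo relative complements. The only difference is presentational: you package both inequalities into a single Yoneda-style chain of bi-implications over an arbitrary $\gamma$, whereas the paper derives $f^*\alpha \imply f^*\beta \le f^*(\alpha\imply\beta)$ by a one-directional derivation tree and disposes of the converse inequality separately by the fact that $f^*$ preserves meets.
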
 
\begin{proof}
Suppose $f:A\arr B$ is a morphism in $\C$, by \ref{GQ} there exists $\exists_f :P(A)\arr P(B)$ statisfying Frobenius Reciprocity.
\[
\AxiomC{$(f^*\alpha \imply f^*\beta) \wedge f^*\alpha \le f^*\beta$}
\UnaryInfC{$\exists_f((f^*\alpha \imply f^*\beta) \wedge f^*\alpha) \le \beta$}
\UnaryInfC{$\exists_f(f^*\alpha \imply f^*\beta) \wedge \alpha \le \beta$}
\UnaryInfC{$\exists_f(f^*\alpha \imply f^*\beta) \le \alpha \imply \beta$}
\UnaryInfC{$f^*\alpha \imply f^*\beta \le f^*(\alpha \imply \beta)$}
\DisplayProof
\]
To prove that $f^*(\alpha \imply \beta) \le f^*\alpha \imply f^*\beta $ it suffices to use the distributivity of reindexing functors on meets.
\end{proof}
\begin{prop}\label{avidian}
If $P:\C^{op} \arr \textbf{ISL}$ is an existential elementary doctrine with right adjoints $\forall_{\pi}$ along every projection $\pi$, then $P$ is universal. 
\end{prop}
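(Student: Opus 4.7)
The plan is to deduce Beck-Chevalley for $\forall_\pi$ by passing to left adjoints and invoking the Beck-Chevalley identity already furnished by Remark \ref{GQ} for the generalized existential $\exists_f$. Since right adjoints are determined uniquely by their left adjoints, proving the desired equation of $\forall$'s reduces to the matching equation of $\exists$'s, which is available because $P$ is elementary and existential.

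Fix a pullback square as in Definition \ref{uni}. One must show $\forall_{\pi'}\circ (id_X\times f)^* = f^*\circ \forall_\pi$ as functors $P(X\times Y)\arr P(Y')$. All four functors involved admit left adjoints: $\pi^*\dashv \forall_\pi$ and $(\pi')^*\dashv \forall_{\pi'}$ by hypothesis, whereas $\exists_f\dashv f^*$ and $\exists_{id_X\times f}\dashv (id_X\times f)^*$ exist by Remark \ref{GQ}. Composing adjoints, the left adjoint of $f^*\circ \forall_\pi$ is $\pi^*\circ \exists_f$, while that of $\forall_{\pi'}\circ (id_X\times f)^*$ is $\exists_{id_X\times f}\circ (\pi')^*$. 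These two left adjoints coincide, because this is exactly the Beck-Chevalley identity for the generalized existential on the pullback of \ref{uni}, as stated in Remark \ref{GQ}. By uniqueness of right adjoints one concludes $\forall_{\pi'}\circ (id_X\times f)^* = f^*\circ \forall_\pi$, so $P$ is universal.

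The only subtle point, and what I expect to be the main thing to double check, is the orientation of the Beck-Chevalley identity in Remark \ref{GQ}: one must verify that its mate under the adjunctions $\pi^*\dashv \forall_\pi$ and $(\pi')^*\dashv \forall_{\pi'}$ is the canonical comparison morphism for $\forall$ along the projection (rather than its dual), so that equality of left adjoints translates into exactly the equation we want. Once this bookkeeping is settled, the proof is entirely formal, with no further computation needed beyond that already done in Remark \ref{GQ}.
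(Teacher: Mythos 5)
Your argument is correct. The paper itself offers no proof of Proposition \ref{avidian} (it is dismissed as ``standard''), and what you give is precisely the standard argument that fills that gap: universality only adds the Beck--Chevalley condition on top of the hypothesised right adjoints, both $f^*\circ\forall_{\pi}$ and $\forall_{\pi'}\circ(id_X\times f)^*$ possess left adjoints ($\pi^*\circ\exists_f$ and $\exists_{id_X\times f}\circ(\pi')^*$ respectively, using the generalized existential of Remark \ref{GQ}), these left adjoints are equal by the Beck--Chevalley identity for $\exists$ asserted in that remark, and in a poset-valued doctrine right adjoints are literally unique, so the two composites coincide. Your worry about the orientation of the mate is moot here: since the fibres are posets the remark asserts an honest equality of monotone maps rather than a comparison $2$-cell, and equality of left adjoints immediately yields equality of the right adjoints, with no direction to check. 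Two small caveats: the displayed identity in Remark \ref{GQ} contains a typo (as written, $\exists_{(id_X\times f)}\,\pi^*$ does not typecheck; the intended statement is $\exists_{id_X\times f}\circ(\pi')^*=\pi^*\circ\exists_f$), so you should quote the corrected form; and that identity is itself only asserted, not proved, in the paper --- if you want a self-contained argument you would verify it from the explicit formula for $\exists_f$, Frobenius reciprocity, and Beck--Chevalley for projections, but relying on it as stated is legitimate given that the paper takes it as established background.
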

%\begin{proof}
%We need show that for every projection $\pi_B:X\times B \arr B$ in $\C$, the right adjoint $\forall_{\pi_B}$ satisfies Beck Chevalley. Take an element $\alpha$ in $P(X\times B)$ and a morphism $f:A\arr B$ in the base category and recall that $f^*$ has a left adjoint $\exists_f$ satisfying Beck-Chevalley as in $\ref{GQ}$.
%\[
%\AxiomC{$\pi_A^*\forall_{\pi_A} (id_X \times f)^*\alpha \le (id_X\times f)^*\alpha$}
%\UnaryInfC{$\exists_{(id_X\times f)} \pi_A^*\forall_{\pi_A} (id_X \times f)^*\alpha \le \alpha$}
%\UnaryInfC{$\pi_B^*\exists_f \forall_{\pi_A} (id_X \times f)^*\alpha \le \alpha$}
%\UnaryInfC{$\exists_f \forall_{\pi_A} (id_X \times f)^*\alpha \le \forall_{\pi_B} \alpha$}
%\UnaryInfC{$\forall_{\pi_A} (id_X \times f)^*\alpha \le f^*\forall_{\pi_B} \alpha$}
%\DisplayProof
%\]
%The other inequality $f^*\forall_{\pi_B} \alpha\le\forall_{\pi_A} (id_X \times f)^*\alpha$ is canonical.
%\end{proof}
As a corollary of \ref{des} and \ref{asaf}, we have that if $P$ is an existential doctrine and every fiber has pseudo relative complements, then $P_\D$ is implicational. And, as a corollary of \ref{des} and \ref{avidian}, if $P$ is an existential doctrine with right adjoints along every projections, then $P_\D$ is universal. In particular the doctrine $\mathcal{O}_\D$ is implicational and universal.\\
%%%%%%%%%%%%%%%
\\Power objects are not preserved, but it holds that
\begin{prop}\label{h1} If $P:\C^{op}\arr \textbf{ISL}$ is universal and implicational with weak power objects, then $P_\D$ has weak power objects.
\end{prop}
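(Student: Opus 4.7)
The plan is to build the weak power object of $(A,\rho)$ in $\Q_P$ on top of $\pi A$ from $P$, after first $\rho$-saturating the membership predicate. Concretely, let $p\colon A\times A\times \pi A \to A\times \pi A$ be the projection $(x,y,z)\mapsto (x,z)$, and define the saturated membership
\[
\in_A^\rho \;:=\; \forall_{p}\bigl(\langle\pi_1,\pi_2\rangle^* \rho \imply \langle\pi_2,\pi_3\rangle^* \in_A\bigr) \;\in\; P(A\times \pi A),
\]
which exists thanks to the implicational and universal structure of $P$. Thinking elementwise, $\in_A^\rho(x,z)$ is ``for every $y$, if $x\,\rho\,y$ then $y\in_A z$''. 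Next, with $q\colon A\times \pi A\times \pi A\to \pi A\times \pi A$ the projection forgetting $A$, set
\[
\tau \;:=\; \forall_q\bigl(\langle\pi_1,\pi_2\rangle^*\! \in_A^\rho \;\iff\; \langle\pi_1,\pi_3\rangle^*\! \in_A^\rho\bigr)\;\in\; P(\pi A \times \pi A),
\]
so that $\tau(z,w)$ encodes the extensional equality of $z$ and $w$ modulo $\rho$. I would then declare $\pi(A,\rho):=(\pi A,\tau)$ and $\in_{(A,\rho)}:=\in_A^\rho$.

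The first routine step is verifying that $\tau$ is an equivalence relation on $\pi A$: reflexivity, symmetry and transitivity of $\iff$ inside $\forall_q$, together with Beck--Chevalley for the universal quantifier, give these formally. The second step is the descent condition for $\in_A^\rho$ with respect to $\rho\boxtimes\tau$ on $A\times \pi A\times A\times \pi A$: one first observes that by construction $\in_A^\rho$ is itself $\rho$-saturated in the first coordinate (reindex its defining inequality along a suitable map and use reflexivity and transitivity of $\rho$, as in the proof of Lemma \ref{elem}), and then uses the definition of $\tau$ to transport through the second coordinate. Combining these two $\rho$- and $\tau$-saturations gives the required descent.

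For the universal property, let $\phi\in P_\D((A,\rho)\times(B,\sigma))=\D es_{\rho\boxtimes\sigma}$. Regarding $\phi$ as an element of $P(A\times B)$, the weak power object structure of $P$ yields a morphism $\{\phi\}\colon B\to \pi A$ in $\C$ with $\phi=(id_A\times\{\phi\})^*\!\in_A$. I would then show the two nontrivial facts separately. First, $\phi=(id_A\times\{\phi\})^*\!\in_A^\rho$: applying $(id_A\times id_A\times \{\phi\})^*$ to the definition of $\in_A^\rho$ and using Beck--Chevalley reduces this to verifying $\phi(x,b)=\forall y.(\rho(x,y)\imply \phi(y,b))$; the inequality $\le$ is the descent condition on $\phi$ in the first coordinate, and $\ge$ follows by instantiating $y=x$ via $\Delta_A^*$ and reflexivity of $\rho$. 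Second, $\{\phi\}$ is a $\Q_P$-morphism $(B,\sigma)\to(\pi A,\tau)$: reindexing $\tau$ along $\{\phi\}\times\{\phi\}$ and using the identity just proved, the condition $\sigma\le(\{\phi\}\times\{\phi\})^*\tau$ unfolds to $\sigma(b_1,b_2)\le\forall x.(\phi(x,b_1)\iff \phi(x,b_2))$, which is exactly the descent condition on $\phi$ in the second coordinate (using symmetry of $\sigma$).

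The main obstacle I anticipate is neither of these steps individually but keeping the bookkeeping honest: the proofs above are entirely formal manipulations of adjoints, Beck--Chevalley and Frobenius instances, but each unfolding of $\in_A^\rho$ and $\tau$ passes through a product of three or four copies of $A$ and $\pi A$, and one has to pick the right projections and reindex cleanly so that the saturation of $\in_A^\rho$ and the definition of $\tau$ actually match the shape of $\rho\boxtimes\tau$ on $A\times \pi A\times A\times \pi A$. Once this is in place, uniqueness of $\{\phi\}$ as a $\C$-morphism is inherited from $P$, and uniqueness up to the equivalence $\tau$ (if one were to ask for it) is automatic.
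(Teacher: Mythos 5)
Your construction is essentially the paper's: the paper takes the carrier $\pi A$ with the equivalence relation $\forall_{\langle\pi_2,\pi_3\rangle}(\langle\pi_1,\pi_2\rangle^*\in_A\ \sse\ \langle\pi_1,\pi_3\rangle^*\in_A)$ and the membership predicate $\in_A\wedge\ \forall_{\langle\pi_1,\pi_3\rangle}(\langle\pi_1,\pi_2\rangle^*\rho\imply\langle\pi_2,\pi_3\rangle^*\in_A)$, which agree with your $\tau$ and $\in_A^\rho$ up to the redundant conjunct $\in_A$ (absorbed using reflexivity of $\rho$ and the fact that reindexing preserves implication) and your use of the saturated rather than raw membership inside $\tau$. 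Both variants are verified by the same adjunction and descent computations, so this is the same proof, with your version supplying the verification details the paper omits.
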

\begin{proof} A weak power object of $(A, \rho)$ in $\mathcal{Q}_P$ is $$(\pi A, \forall_{<\pi_2,\pi_3>}(<\pi_1,\pi_2>^* \in_A \sse <\pi_1,\pi_3>^* \in_A ))$$
where the membership predicate $\in_{(A,\rho)}$ is $$\in_A \wedge\ \forall_{<\pi_1, \pi_3>}(<\pi_1, \pi_2>^*\rho \imply <\pi_2, \pi_3>^* \in_A)$$.
\end{proof}
It is worth to remark that power objects as defined in \ref{h1} are still weak, but they gain the property that, in $\Q_P$ if two morphisms $\{\phi\}$ and $\{\phi\}'$ classify the same element $\phi$ in the fiber over $(A,\rho) \times (B,\sigma)$, then it holds that $\top_B \le <\{\phi\},\{\phi\}'>^*\delta_{\pi(A,\rho)}$. This lead to introduce internal extentionality. We said that for an object $A$ in the base category of an elementary doctrine, $\delta_A$ provides a notion of internal equality for terms of type $A$. Certainly external equality implies internal, in the sense that given $t_1,t_2:X\arr A$, if it holds that $t_1=t_2$ (i.e they are the same morphism in $\C$) then $\top_X \le <t_1,t_2>^*\delta_A$. The converse can be forced considering the category $[\C]$, whose objects are the same as in $\C$ and the morphism are equivalence classes of morphism of $\C$ with respect to the relation: $[t_1]=[t_2]$ if and only if $\top_X\ \le <t_1,t_2>^*\delta_A$. This construction is given directly in \cite{RM2,RM}, and named extentional collapse of $\C$.\\
\\If $P$ is an elementary doctrine with power objects in which the base category $\C$ has a terminal object $1$, then for every object $A$ in $\C$, every element $\phi$ determines (at least) a term of type $\pi1$, i.e. $\{\phi\}: A\arr \pi1$ (see \ref{p1}). Hence we can use this correspondence to define a notion of internal equality for formulas $$\phi \leftrightarrow \psi\ :=\langle\{\phi\},\{\psi\}\rangle^*\delta_{\pi1}$$ which depends on a choice of the morphisms $\{\phi\}$ and $\{\psi\}$ and satisfies the following rule
\[
\AxiomC{$\gamma \le \phi \leftrightarrow \psi$}
\UnaryInfC{$\gamma \wedge \psi \le \phi$\ \quad \ $\gamma \wedge \phi \le \psi$}
\DisplayProof
\]
by the fact that we have $\pi_1^*x\ \wedge\ \delta_{\pi1} \le\ \pi_2^*x$, for every $x$ in $P(\pi1)$, then reindex both sides along $\langle\{\phi\},\{\psi\}\rangle$ with $x = \epsilon_1$ to have $\phi \wedge (\phi \leftrightarrow \psi) \le \psi$, which we use in the following tree
\[
\AxiomC{$\gamma \le \phi \leftrightarrow \psi$}
\UnaryInfC{$\gamma \wedge \phi \le (\phi \leftrightarrow \psi) \wedge \phi$}
\UnaryInfC{$\gamma \wedge \phi \le \psi$}
\DisplayProof
\]
The converse of the previous rule does not holds in general. This motivates the following
\begin{deff} Given a primary doctrine $P:\C^{op}\arr \textbf{ISL}$ in which the base category has a terminal object $1$, a weak power object $\pi1$ and an elementary structure $\delta_{\pi1}$ in the poset over $\pi1\times \pi1$, we say that $P$ has \textbf{extentional entailment} if, for every object $A$ in $\C$ and every element $\phi$, $\psi$ and $\gamma$ in $P(A)$ the following rule
\[
\AxiomC{$\gamma \wedge \psi \le \phi$}
\AxiomC{$\gamma \wedge \phi \le \psi$}
\doubleLine
\BinaryInfC{$\gamma \le <\{\phi\},\{\psi\}>^*\delta_{\pi1}$}
\DisplayProof
\]
is satisfied
\end{deff}
As an immediate property we have that in an elementary doctrine $P:\C^{op}\arr\textbf{ISL}$ with extentional entailment, for every formulas $\phi$ and $\psi$, it holds that $\phi \leftrightarrow \psi$ if and only if $\top \le <\{\phi\},\{\psi\}>^*\delta_{\pi1}$, which means that every classifying morphism is unique in the exentional collapse of $\C$.
\begin{Remark}\label{imply} There is a connection between extentional entailment and the presence of pseudo relative complements in every fibre of a doctrine. If a doctrine $P$ has extentional entailment, then for every object $A$ in $\C$, $P(A)$ has pseudo relative complements: it suffices to define $\alpha \imply \beta\ := (\alpha \wedge \beta)\leftrightarrow\alpha$, in the spirit of logic of toposes (see \cite{BoiJoy}). If $P$ is elementary, then is also implicational by \ref{asaf}.
The converse need not to be true in the sense that, even if an elementary doctrine has pseudo relative complements over each fiber, we have that $\pi_1^*\epsilon_1\sse\pi_2^*\epsilon_1$, may not be the left adjoint to $\Delta_{\pi1}^*$, as we see, for instance, in example \ref{extri}.
\end{Remark}
\begin{exa} (\textbf{Subobjects}) Let $\C$ be a finitely complete small category. The doctrine $\textbf{Sub}:\C^{op}\arr\textbf{ISL}$ has extentional entailment if and only if $\C$ has a subobjects classifier. Let $\Omega$ be the subobjects classifier of $\C$. Then $\pi1$ is $\Omega$. $\epsilon_1$ is the true arrow. To prove the converse, suppose \textbf{Sub} to have extentional entailment. Define $\Omega$ to be $\pi1$. The true arrow is $\epsilon_1: 1\arr\pi1$. Every mono $\phi:X\arr A$ is classified by $\{\phi\}$, since reindexing is given by pullbacks. $\{\phi\}$ is unique because of extentionality of entailment, which says that if $\phi = f^*\epsilon_1$ for some $f$, then $\top \le <\{\phi\}, f>^*\delta_{\Omega} $, where $\delta_{\Omega}$ is $\Delta_{\Omega}$. Under the same conditions, an immediate corollary is that $\C$ is an elementary topos if and only if \textbf{Sub} has power objects. Suppose \textbf{Sub} to have power objects. For each $A$ in $\C$, $\Omega^{A}$ is $\pi A$ and $ev_A$ is $\{\in_A\}$. For every morphism $f : A\times B\arr \Omega$, the transpose $\overline{f}$ is $\{f^*\epsilon_1\}$. Then $\{\in_A\} \circ (id_A\times\{f^*\epsilon_1\})=f$, since they both classify $f^*\epsilon_1$. The converse is proved in $\cite{Jacobs}$, page 336.
\end{exa}
\begin{exa} (\textbf{Triposes})\label{extri} In general a tripos need not have extentional entailment. Take the localic tripos. $1=\{*\}$. $\delta_{\pi_1}:\mathbb{H}^1\times \mathbb{H}^1 \arr \mathbb{H}$ is given by the following assignment $(f,g)\mapsto \top$ if $f=g$, then $\bot$. But $f(*)\sse g(*)$ is not necesserly $\bot$ if $f \not = g$. Analogously for realizability triposes, see \cite{Jacobs} page 331.
\end{exa}
In \ref{imply} and \ref{extri} we showed that an implicational doctrine need not have an extentional entailment. The following proposition says that this holds once the elementary structure is co-freely added to a doctrine. In other words given a  doctrine $P$, the canonical inequality $\delta_{\pi1} \le \pi_1^*\epsilon_1 \sse \pi_2^*\epsilon_1$ is an equality in $P_\D$. 
\begin{prop}\label{h2} If $P:\C^{op}\arr \textbf{ISL}$ is such that $\C$ has a terminal object $1$ with a weak power object $\pi1$ and pseudo relative complements in $P(\pi1)$, then  $P_\D:\Q_P\arr\textbf{ISL}$ has extentional entailment.
\end{prop}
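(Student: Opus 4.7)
The plan is to equip $\pi 1$ with an equivalence relation $\rho_{\pi 1}$ so that $(\pi 1, \rho_{\pi 1})$ is a weak power object of the terminal $(1, \top_{1\times 1})$ of $\Q_P$, and so that the resulting elementary relation $\delta_{(\pi 1, \rho_{\pi 1})} = \rho_{\pi 1}$ of $P_\D$ is by construction the biconditional on formulas; the extentional entailment rule will then unpack directly. Using the pseudo relative complements on $\pi 1$ I would set
$$
\rho_{\pi 1} \;:=\; (\pi_1^* \epsilon_1 \sse \pi_2^* \epsilon_1) \,\wedge\, (\pi_2^* \epsilon_1 \sse \pi_1^* \epsilon_1) \;\in\; P(\pi 1 \times \pi 1),
$$
which is the symmetrised $A=1$ analogue of the relation defining $\delta_{\pi A}$ in Proposition \ref{h1} (the universal quantifier there becoming trivial because $1 \times \pi 1 \cong \pi 1$). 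Checking that $\rho_{\pi 1}$ is an equivalence relation is routine: reflexivity follows from $\epsilon_1 \sse \epsilon_1 = \top$ upon reindexing along $\Delta_{\pi 1}$; symmetry is built into the definition; and transitivity is two applications of modus ponens.

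Next I would show that $(\pi 1, \rho_{\pi 1})$, together with $\epsilon_1$ viewed as an element of $\mathcal{D}es_{\top \boxtimes \rho_{\pi 1}}$, is a weak power object of $(1, \top)$ in $\Q_P$. The one non-immediate point is that, for $\phi \in \mathcal{D}es_\sigma$, the $\C$-arrow $\{\phi\}: B \to \pi 1$ classifying $\phi$ in $P$ lifts to a $\Q_P$-morphism, that is, $\sigma \le (\{\phi\} \times \{\phi\})^* \rho_{\pi 1}$; I would derive this by applying the adjunction $\wedge \dashv \sse$ to the descent condition $\pi_1^* \phi \wedge \sigma \le \pi_2^* \phi$ and to its symmetric counterpart (available because $\sigma$ is symmetric).

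Finally I would verify the extentional entailment rule. The $(\Leftarrow)$ direction is immediate: both $\pi_1^* \epsilon_1 \wedge \rho_{\pi 1} \le \pi_2^* \epsilon_1$ and its mate hold by construction, and reindexing them along $\langle\{\phi\},\{\psi\}\rangle$ gives $\gamma \wedge \phi \le \psi$ and $\gamma \wedge \psi \le \phi$ whenever $\gamma \le \langle\{\phi\},\{\psi\}\rangle^*\rho_{\pi 1}$. For $(\Rightarrow)$, using the identifications $\phi = \langle\{\phi\},\{\psi\}\rangle^* \pi_1^* \epsilon_1$ and $\psi = \langle\{\phi\},\{\psi\}\rangle^* \pi_2^* \epsilon_1$, the hypotheses rewrite as inequalities in $P(A)$ between pulled-back copies of $\pi_i^* \epsilon_1$; transporting them to $P(\pi 1 \times \pi 1)$ and invoking the adjunction $\wedge \dashv \sse$ yields $\gamma \le \langle\{\phi\},\{\psi\}\rangle^* (\pi_1^* \epsilon_1 \sse \pi_2^* \epsilon_1)$ and its mate, whose meet is $\langle\{\phi\},\{\psi\}\rangle^* \rho_{\pi 1}$. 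The hard part will be this last transport: the $\wedge \dashv \sse$ adjunction lives in $P(\pi 1 \times \pi 1)$ whereas the starting inequalities live in $P(A)$, which need not have pseudo complements at all, so the argument must exploit the factorisation of $\phi$ and $\psi$ through $\epsilon_1$ via the classifying arrows rather than attempting to form $\phi \sse \psi$ directly in $P(A)$.
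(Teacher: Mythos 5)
Your construction is, up to notation, the same as the paper's: the paper takes $\pi(1,\top_{1\times1}):=(\pi1,\ \pi_1^*\epsilon_1\sse\pi_2^*\epsilon_1)$, where $\sse$ is the bi-implication built from the pseudo relative complements, so your symmetrised $\rho_{\pi1}$ is literally the paper's relation, and the identification $\delta_{\pi(1,\top_{1\times1})}=\rho_{\pi1}$ via Lemma \ref{elem} is the same step. The difference is that you try to spell out the verifications the paper leaves implicit, and in doing so you have correctly isolated the one step that neither you nor the paper actually carries out.

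That step --- the ``transport'' you flag as the hard part --- is a genuine gap, not a routine computation. Write $h=\langle\{\phi\},\{\psi\}\rangle:A\arr\pi1\times\pi1$. The forward direction of the rule needs to pass from $\gamma\wedge h^*\pi_1^*\epsilon_1\le h^*\pi_2^*\epsilon_1$ in $P(A)$ to $\gamma\le h^*(\pi_1^*\epsilon_1\imply\pi_2^*\epsilon_1)$. The adjunction between $-\wedge x$ and $x\imply -$ lives only in $P(\pi1\times\pi1)$, and $h^*$ is merely a monotone meet-preserving map: it yields $h^*(x\imply y)\wedge h^*x\le h^*y$ (which is why the backward direction of the rule is unproblematic), but nothing makes $h^*(x\imply y)$ the \emph{largest} $\gamma$ with $\gamma\wedge h^*x\le h^*y$. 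To conclude one needs either that $\imply$ commutes with reindexing along $h$ (the implicational hypothesis, which is precisely not assumed here) or a left adjoint $\exists_h$ with Frobenius reciprocity as in Remark \ref{GQ}, arguing $\exists_h\gamma\wedge\pi_1^*\epsilon_1\le\pi_2^*\epsilon_1$ and transposing. Your suggestion to ``exploit the factorisation of $\phi$ and $\psi$ through $\epsilon_1$'' only expresses $\phi$ and $\psi$ as reindexings of $\epsilon_1$; it does not move the inequality into the fibre where $\imply$ exists. Note that the same obstruction already infects the earlier steps you call routine: reflexivity of $\rho_{\pi1}$ needs $\top\le\Delta_{\pi1}^*(\pi_1^*\epsilon_1\imply\pi_2^*\epsilon_1)$, transitivity needs $\langle\pi_1,\pi_3\rangle^*$ to commute with $\imply$, and the claim $\sigma\le(\{\phi\}\times\{\phi\})^*\rho_{\pi1}$ compares a descent condition in $P(B\times B)$ with an implication formed in $P(\pi1\times\pi1)$. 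So either the hypothesis must be strengthened (to $P$ implicational, or elementary existential) or an argument must be supplied that is currently missing from your proposal --- and, it should be said, from the paper's own proof, which is silent on exactly these points.
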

\begin{proof} Recalling that in $\Q_P$ the terminal object is $(1, \top_{1\times1})$, define $\pi(1,\top_{1\times1})\ :=(\pi1,\pi_1^*\epsilon_1\sse\pi_2^*\epsilon_1)$. $\epsilon_1$ certainly belongs to the category of descent data, since $\pi_1^*\epsilon_1 \wedge (\pi_1^*\epsilon_1\sse \pi_2^*\epsilon_1) \le \pi_2^*\epsilon_1$. To prove that $P_\D$ has extentional entailment it is left to show that $\pi_1^*\epsilon_1\sse\pi_2^*\epsilon_1$ is $\delta_{\pi(1,\top_{1\times1})}$, which is true by proposition \ref{elem}, .
\end{proof}
As a final remark note that, under the hypothesis of proposition \ref{h1}, the elementary doctrine $P_\D$ is a tripos, since it is possible to define finite joins and  existential quantifications on the basis of implicational operations and (higher order) universal quantifications (See \cite{Tripos}, \cite{Jacobs} and  \cite{McLarty}).
 Then a doctrine that differs from a tripos only by the lack of an elementary structure, thanks to propositions \ref{des} and \ref{h1} comes to be a tripos and this tripos has extentional entailment by \ref{h2}.
 On the other hand any tripos $P$, which is known to be an interpretation of higher order many-sorted non-extentional predicate logic (see \cite{Jacobs} or \cite{VanOO}), generetes a new tripos $P_\D$ which interpretes higher order many-sorted predicate logic with extentional entailment.
\bibliography{biblio}
\bibliographystyle{plain}

\end{document}